\tikzstyle{none}=[inner sep=0pt]
\def\imod#1{\allowbreak\mkern10mu({\operator@font mod}\,\,#1)}
\newtheorem{theorem}{Theorem}[section]
\newtheorem{lemma}[theorem]{Lemma}
\newtheorem{corollary}[theorem]{Corollary}
\theoremstyle{definition}
\newtheorem{definition}[theorem]{Definition}
\newtheorem{problem}[theorem]{Problem}
\theoremstyle{remark}
\theoremstyle{remark}
\numberwithin{equation}{section}
    \DeclareMathOperator{\lh}{lh}
    \DeclareMathOperator{\id}{Id}
    \DeclareMathOperator{\Coll}{Coll}
    \DeclareMathOperator{\MA}{MA}
    \newcommand{\restrict}{\!\upharpoonright\!}
    \newcommand{\forces}{\Vdash}
\def\P{{\mathbb P}}
\def\Q{{\mathbb Q}}
\title[Definability and almost disjoint families]{Definability and almost disjoint families}
\author{Asger T\"ornquist}
\address{Department of Mathematical Sciences, University of Copenhagen, Universitetsparken 5, 2100 Copenhagen, Denmark}
\email{asgert@math.ku.dk}
\subjclass[2010]{03E05, 03E15, 03E35, 03E45, 03E50}
\date{\today}
\keywords{Descriptive set theory, definability, projective sets, maximal almost disjoint families, Solovay's model.}
\begin{document}

\maketitle

\begin{abstract}
We show that there are no infinite maximal almost disjoint (``mad'') families in Solovay's model, thus solving a long-standing problem posed by A.D.R. Mathias in 1967. We also give a new proof of Mathias' theorem that no analytic infinite almost disjoint family can be maximal, and show more generally that if Martin's Axiom holds at $\kappa<2^{\aleph_0}$, then no $\kappa$-Souslin infinite almost disjoint family can be maximal. Finally we show that if $\aleph_1^{L[a]}<\aleph_1$, then there are no $\Sigma^1_2[a]$ infinite mad families.
\end{abstract}

\section{Introduction}

{\bf (A)} In this paper, we will denote by $\omega$ the set $\{0,1,2,\ldots\}$ of non-negative integers. Recall that a family $\mathcal A\subset \mathcal P(\omega)$ of subsets of $\omega$ is called \emph{almost disjoint} if for all $x,y\in\mathcal A$, either $x=y$ or $x\cap y$ is finite. An almost disjoint family is \emph{maximal} (``mad'') if it no strictly larger almost disjoint family contains it. It is not hard to see that a countably infinite almost disjoint family \emph{never} is maximal. On the other hand, the existence of infinite mad families follows from Zorn's lemma. 

Since we may identify $\mathcal P(\omega)$ with $2^\omega=\{0,1\}^\omega$, and since $\{0,1\}^\omega$ is naturally homeomorphic to the Cantor set when given the product topology (taking $\{0,1\}$ discrete), it makes sense to ask how \emph{definable}, in terms of the topology, an infinite mad family can be. In particular, we may ask if an infinite mad family can be Borel, analytic, or projective in the sense of \cite{kechris95}.

Almost disjoint families in general, and mad families in particular, are combinatorial objects of fundamental interest in set theory. A vast body of literature exists on the subject of the \emph{possible cardinalities} of mad families, and on the \emph{definability} of mad families, see e.g. \cite{BK13, FFZ11, kunen80, mathias77, AT13}. The most fundamental result is due to Mathias, who proved in his seminal paper \emph{Happy Families} \cite{mathias77} that there are no analytic infinite mad families. Mathias also showed, assuming the existence of a Mahlo cardinal, that there is a model of ZF (Zermelo-Fraenkel set theory without Choice) in which there are no infinite mad families. However, Mathias' techniques fell just short of allowing him to answer the following fundamental question: Is there an infinite mad family in Solovay's model? Here, Solovay's model refers to the model of ZF famously constructed by Solovay in \cite{solovay70}, in which every set of reals is Lebesgue measurable, Baire measurable, and is either countable or contains a perfect set.

\medskip

The main result of this paper is an answer to Mathias' question:

\begin{theorem}\label{t.intromainthm}
There are no infinite mad families in Solovay's model.
\end{theorem}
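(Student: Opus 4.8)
The plan is to combine the classical
forcing-theoretic analysis of Solovay's model with a Ramsey-theoretic
argument in the spirit of Mathias. Recall that Solovay's model is
obtained by collapsing an inaccessible cardinal $\kappa$ to
$\aleph_1$ via the L\'evy collapse $\Coll(\omega,{<}\kappa)$, and
then passing to the inner model $\mathrm{HOD}$ of sets hereditarily
ordinal-definable from a real. The crucial structural feature I would
exploit is that every set of reals in Solovay's model is
\emph{definable from a real parameter} $a$, and hence, by Solovay's
analysis, is \emph{$\infty$-Borel} and enjoys a strong homogeneity
property: for a real $a$ coding the relevant parameters, and for any
further real $z$, the truth of ``$x\in\mathcal A$'' can be decided by a
formula evaluated in the collapse extension $V[a][z]$, and this
evaluation does not depend on which generic $z$ is chosen over
$V[a]$. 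The first step, then, is to fix an infinite almost disjoint
family $\mathcal A$ in Solovay's model, pick a real $a$ so that
$\mathcal A$ is definable from $a$, and reduce everything to the
model $M=V[a]$, over which $\kappa$ remains inaccessible and the
remainder of the collapse $\Coll(\omega,{<}\kappa)$ is still generic.

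Next I would set up the combinatorial heart of the argument. Suppose
toward a contradiction that $\mathcal A$ is maximal. Enumerate a
countably infinite subfamily $\{x_n : n\in\omega\}\subseteq\mathcal A$;
since $\mathcal A$ is infinite and mad, it is in particular uncountable
(indeed no countable a.d.\ family is maximal, as noted in the
introduction), so there is ``room'' to diagonalize. The goal is to
construct, from a Cohen-- or Mathias--generic real over $M$, a single
infinite set $y\subseteq\omega$ that is \emph{almost disjoint from
every} member of $\mathcal A$, contradicting maximality. The natural
tool is Mathias forcing (or the associated happy-family machinery from
\cite{mathias77}) guided by the family $\mathcal A$: one wants a
generic infinite $y$ that ``escapes'' each $x\in\mathcal A$. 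Because
$\mathcal A$ is definable from $a$ in a homogeneous way, membership in
$\mathcal A$ is decided uniformly across the generic extensions, and
this is precisely what lets a genericity/diagonalization argument
produce a set almost disjoint from \emph{all} members rather than just
the countably many in a fixed enumeration.

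The technical linchpin I would aim for is a \emph{definable Ramsey /
Mathias property} valid in Solovay's model: every set of reals (in
particular the family $\mathcal A$, suitably coded as a set of infinite
subsets of $\omega$) is ``completely Ramsey'' or has the Mathias
property, so that there is an infinite $H\subseteq\omega$ with
$[H]^\omega$ either contained in or disjoint from the coded set. Solovay's
homogeneity, together with the inaccessibility of $\kappa$, is exactly
what upgrades the Ramsey property from analytic sets (Mathias' original
theorem, reproved later in the paper) to \emph{all} sets of reals in
the model. Applying this to the appropriately coded family should yield
an infinite $y$ witnessing non-maximality. Concretely, I would show
that if $\mathcal A$ were mad, the set of reals coding ``branches that
diagonalize $\mathcal A$'' would be nonempty by genericity, and any such
branch gives the desired $y$ with $|y\cap x|<\aleph_0$ for all
$x\in\mathcal A$.

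The step I expect to be the main obstacle is making the
diagonalization produce a set almost disjoint from the \emph{entire}
uncountable family at once. Naively, a generic real over $M$ only needs
to cope with the countably many members of $\mathcal A$ that already
live in $M$, and new members of $\mathcal A$ may appear in the further
generic extension; one must argue that the homogeneity of Solovay's
model prevents $\mathcal A$ from ``adapting'' to swallow the generic
$y$. This is where the full force of the $\infty$-Borel coding and the
factoring of the L\'evy collapse must be brought to bear: I would
verify that, for the real $a$ defining $\mathcal A$, the statement
``$y$ is almost disjoint from every member of $\mathcal A$'' is decided
in $M$ independently of the generic, so that a single well-chosen
generic $y$ works uniformly. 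Controlling this interaction between the
forcing that produces $y$ and the definability of $\mathcal A$ across
all the intermediate models is, I expect, the crux of the whole proof.
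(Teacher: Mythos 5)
There is a genuine gap, and it sits exactly where the historical difficulty of this problem lies. Your plan is to upgrade the Ramsey/Mathias property to all sets of reals in Solovay's model and then ``apply it to the appropriately coded family'' to extract an infinite $y$ almost disjoint from every member of $\mathcal A$. But the statement ``every set of reals in Solovay's model is completely Ramsey'' is already a theorem of Mathias, and it does \emph{not} yield the non-existence of infinite mad families: to diagonalize against $\mathcal A$ by Mathias-style forcing one must work relative to the coideal of sets not almost covered by finitely many members of $\mathcal A$, and one needs that coideal to be a ``happy family'' (selective), which is precisely what Mathias could not establish in Solovay's model without extra hypotheses (he needed a Mahlo cardinal to build a different model). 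Your last paragraph correctly identifies the crux --- coping with the uncountably many members of $\mathcal A$ that appear only in further generic extensions --- but the proposed resolution (invoking homogeneity and $\infty$-Borel coding so that ``$y$ is almost disjoint from every member of $\mathcal A$'' is decided in $M$) is not an argument; it is a restatement of what needs to be proved. As written, the proposal reproduces the approach that left this question open since 1967.

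The paper's proof avoids the Ramsey property entirely. Its key elementary observation (Lemma \ref{l.easy}) is that to show $\mathcal A$ is not maximal it suffices to exhibit a \emph{countable} family $(A_n)$ such that every $x\in\mathcal A$ is almost contained in a finite union of the $A_n$ while no finite union is cofinite; the diagonalizing $z$ is then built by a trivial recursion, with no forcing over $M$ at all. The real work is producing $(A_n)$: one introduces $(\P,\sigma)$-diagonal sequences for names $\sigma$ forced to denote new members of $\mathcal A$ (Lemma \ref{l.diagseqsolovay}, whose proof uses almost-disjointness via the product forcing $\P\times\P$), and runs an ordinal-length recursion in $L[a]$ over all triples (collapse poset, condition, name) to generate the sets $A^\alpha_i=x(\sigma_\xi,p^0_i)$. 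Claim 2 of that proof then shows every member of $\mathcal A$, realized as $\sigma_G$ for some small collapse generic (Fact 1), is almost covered by finitely many of these countably many sets. None of this machinery --- the covering lemma, the diagonal sequences, or the exhaustion argument --- appears in your proposal, and without some substitute for it the argument does not close.
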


The proof of Theorem \ref{t.intromainthm}, which can be found in \S 3, comes out of a new proof of Mathias' result that there are no analytic mad families. That proof is presented in \S \ref{s.analytic} of the paper. In fact, in \S \ref{s.analytic} we show the following more general result:

\begin{theorem}\label{t.introkappasouslin}
Assume Martin's axiom holds at some cardinal $\kappa<2^{\aleph_0}$. Then there are no infinite $\kappa$-Souslin mad families. 
\end{theorem}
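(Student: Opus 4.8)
The plan is to argue by contradiction: assuming $\mathcal A$ is an infinite $\kappa$-Souslin almost disjoint family that is maximal, I will construct an infinite set $x\subseteq\omega$ with $|x\cap a|<\aleph_0$ for every $a\in\mathcal A$. Such an $x$ cannot lie in $\mathcal A$ (a set is almost disjoint from itself only if finite), so $\mathcal A\cup\{x\}$ is a strictly larger almost disjoint family, contradicting maximality. Fixing a tree $T$ on $2\times\kappa$ (pruned so that every node extends to an infinite branch) with $\mathcal A=p[T]$, I first reformulate the obstruction. Let $B$ be the set of infinite $x$ such that some branch $(a,f)\in[T]$ gives $|x\cap a|=\aleph_0$; as a projection along $T$ this $B$ is again $\kappa$-Souslin, and maximality says precisely that $[\omega]^\omega\subseteq B\cup\mathcal A$. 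Since every member meets itself infinitely, in fact $B=[\omega]^\omega$, so the task becomes: produce an infinite $x\notin B$.

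To build $x$ I will use Martin's axiom at $\kappa$. Consider the Mathias-style poset $\mathbb P$ whose conditions are pairs $(s,F)$ with $s\in[\omega]^{<\omega}$ a finite stem and $F\subseteq T$ a finite set of nodes, ordered by $(s',F')\le(s,F)$ iff $s'\supseteq s$, $F'\supseteq F$, and the new stem-elements $s'\setminus s$ avoid the ``$1$-coordinates'' recorded by the nodes of $F$. Two conditions sharing the same stem $s$ are compatible (take the union of the node-sets), so $\mathbb P$ is $\sigma$-centered, hence ccc, and $\MA_\kappa$ applies to it. The generic object $x_G=\bigcup\{s:(s,F)\in G\}$ will be infinite provided $G$ meets the countably many dense sets $\{(s,F):|s|\ge n\}$, and the remaining, decisive requirements will be indexed by the nodes of $T$, of which there are only $\kappa$ many, so that in total $\MA_\kappa$ need meet at most $\kappa$ dense sets.

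The crux---and the step I expect to be the main obstacle---is to design, for each node $t\in T$, a dense set $D_t$ so that a filter meeting all of them yields an $x_G$ that escapes \emph{every} branch of $T$, and not merely the countably many branches one happens to ``follow.'' A naive commitment to a single node constrains infinitely many potential members at once and does not by itself bound any of their intersections with $x_G$; this is exactly where almost-disjointness must be exploited. The idea is that because distinct members of $\mathcal A$ agree only on a finite set, the finite commitments accumulated along $T$ can always be extended to divert the next stem-element away from any given witness branch, so that after meeting all $D_t$ no branch $(a,f)\in[T]$ has $a$ meeting $x_G$ infinitely. Concretely, I would show that if some member $b$ met $x_G$ infinitely, then, tracing $b$ through $T$, there would be a node $t$ on the corresponding branch whose dense set $D_t\in G$ was realized by a condition already forbidding the addition of further elements of $b$---contradicting $|x_G\cap b|=\aleph_0$. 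Making this reflection precise, so that $\kappa$ many node-requirements genuinely certify almost-disjointness from all (possibly $2^{\aleph_0}$ many) members, is the heart of the argument.

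Finally, granting the construction, $x_G$ is an infinite set almost disjoint from every element of $\mathcal A$, i.e. $x_G\notin B=[\omega]^\omega$, a contradiction; hence no infinite $\kappa$-Souslin mad family exists. I note that the analytic case of Mathias' theorem is recovered by taking $\kappa=\aleph_0$: analytic sets are exactly the $\aleph_0$-Souslin sets, and $\MA_{\aleph_0}$ is a theorem of ZFC, so there the $\kappa$ dense sets are only countably many and can be met by a straightforward diagonalization with no appeal to Martin's axiom.
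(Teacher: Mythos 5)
Your high-level architecture matches the paper's second half: a $\sigma$-centered poset with conditions $(s,F)$ ($s$ a finite stem, $F$ a finite set of forbidden objects), at most $\kappa$ many dense sets, and an appeal to $\MA(\kappa)$ to produce an infinite $x_G$ almost disjoint from every member of $\mathcal A$. (One small missing reduction: you should first dispose of the case $|\mathcal A|\le\kappa$, using the standard fact that under $\MA(\kappa)$ no almost disjoint family of size $\le\kappa$ is maximal.) But the step you yourself flag as ``the heart of the argument'' is genuinely absent, and it is not a routine verification. The difficulty with indexing requirements by arbitrary nodes $t\in T$ is this: if ``avoiding the $1$-coordinates recorded by $t$'' means avoiding $A_t:=\bigcup p[T_{[t]}]$, then for a general node $t$ this set is a union of possibly continuum many members of $\mathcal A$ and can be cofinite or all of $\omega$ (already $\bigcup\mathcal A$ typically is), so the sets $\{(s,F):|s|\ge n\}$ fail to be dense and $x_G$ need not be infinite. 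If instead it means avoiding only the finitely many $1$'s occurring in the finite string $t_*$, the commitment is vacuous and bounds no intersection $x_G\cap a$. So one cannot use all nodes of $T$; one must first isolate a $\kappa$-indexed family of sets $A_\vartheta\subseteq\omega$ satisfying simultaneously (i) every $a\in\mathcal A$ is almost contained in a finite union of them, and (ii) every finite union of them is co-infinite. Producing that family is the real content of the theorem.

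The paper does this by a combinatorial analysis of $T$ that your sketch does not contain: it defines \emph{diagonal sequences} in $T$ (pairs of nodes, incompatible in the first coordinate, below which all pairwise intersections of first coordinates are frozen --- such freezing points exist precisely because $p[T]$ is almost disjoint), proves that a tree on $2\times\kappa$ projecting to an almost disjoint family of size $>\kappa$ admits an infinite diagonal sequence, and then runs a transfinite derivative process $T^0\supseteq T^1\supseteq\cdots\supseteq T^{\alpha^*}$ with $\alpha^*<\kappa^+$, at each stage $\alpha$ extracting an infinite diagonal sequence $(t^j_i(\alpha))$ and setting $A^\alpha_i=\bigcup p[T^\alpha_{[t^0_i(\alpha)]}]$, where $T^{\alpha+1}$ retains only those nodes below which some branch escapes all finite unions formed so far. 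Property (i) then holds because every $x\in p[T]$ either survives into the size-$\le\kappa$ residue $p[T^{\alpha^*}]$ or is discarded at some stage precisely because it is almost covered; property (ii) follows from the freezing clause in the definition of a diagonal sequence together with the retention rule. Your appeal to ``distinct members agree only on a finite set'' is the right raw ingredient, but by itself it does not produce the sets $A_\vartheta$, and without them the forcing argument has nothing to diagonalize against.
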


In particular, if $\MA(\aleph_1)$ holds then there are no $\mathbf{\Sigma}^1_2$ mad families, a result previously obtained by very different means by Brendle and Khomskii in \cite{BK13}. More information about $\Sigma^1_2$ mad families is obtained in \S 3 where we also prove:

\begin{theorem}\label{t.intro12}\ 

{\rm (1)} If $\aleph_1^{L[a]}<\aleph_1$, where $a\in\omega^\omega$, then there are no ${\Sigma}^1_2[a]$ infinite mad families.

{\rm (2)} If $\aleph_1^{L[a]}<\aleph_1$ for all $a\in\omega^\omega$ then there are no infinite $\mathbf{\Sigma}^1_2$ mad families.
\end{theorem}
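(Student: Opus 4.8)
The plan is to reduce both statements to a single non-maximality argument that exploits the smallness of $L[a]$ together with the combinatorial engine behind Theorem~\ref{t.introkappasouslin}. I first treat (1), so fix $a\in\omega^\omega$ with $\aleph_1^{L[a]}<\aleph_1$ and suppose, toward a contradiction, that $\mathcal A$ is an infinite $\Sigma^1_2[a]$ mad family. By Shoenfield's analysis I may write $\mathcal A=p[T]$ for a tree $T\in L[a]$, so that ``$x\in\mathcal A$'' is $\Sigma^1_2[a]$ and hence, by Shoenfield absoluteness, is computed the same way in $L[a]$, in $V$, and in any intermediate inner model containing $a$. The goal is to produce a single real $r\in V$ that is infinite, almost disjoint from every member of $\mathcal A$, and satisfies $r\notin\mathcal A$; then $\mathcal A\cup\{r\}$ is a strictly larger almost disjoint family, contradicting maximality.

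The key observation is that the two requirements on $r$, namely ``$r$ is almost disjoint from every member of $\mathcal A$'' and ``$r\notin\mathcal A$'', are each $\Pi^1_2[a,r]$, since their negations assert the existence of a member of $p[T]$ with a given property, which is $\Sigma^1_2[a,r]$. By Shoenfield absoluteness these statements hold in $V$ if and only if they hold in $L[a][r]$. Thus it suffices to find $r\in V$ such that, inside $L[a][r]$, the real $r$ witnesses that the $\Sigma^1_2[a]$ family is not maximal. If the family computed in $L[a]$ is already non-maximal there, a witnessing real lies in $L[a]\subseteq V$ and we are done by the same absoluteness principle; so I may assume it is mad in $L[a]$. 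Now I invoke the hypothesis: since $\aleph_1^{L[a]}<\aleph_1$, the ordinal $\kappa:=\aleph_1^{L[a]}$ is \emph{countable} in $V$, and in $L[a]$ the family is $\kappa$-Souslin via the Shoenfield tree on $\omega\times\kappa$. The construction underlying Theorem~\ref{t.introkappasouslin} diagonalizes against a $\kappa$-Souslin almost disjoint family by meeting $\kappa$ many dense requirements, using $\MA(\kappa)$ to realize them; but here $\kappa$ is countable in $V$, so $\MA(\kappa)$ degenerates to the Rasiowa--Sikorski lemma and the construction can be carried out outright in ZFC. Running it in $V$ against the tree $T\in L[a]$ then produces the candidate real $r$.

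The main obstacle is the transfer step, i.e.\ guaranteeing that the real $r$ built by diagonalizing $T$ genuinely witnesses non-maximality \emph{inside $L[a][r]$}, where the relevant family is $p[T]^{L[a][r]}$ rather than the restriction to the low-height branches that $r$ literally avoids. This is exactly where the countability of $\kappa$ and absoluteness must be combined: one needs that the construction preserves the bound $\kappa=\aleph_1^{L[a]}$, so that in $L[a][r]$ every member of the $\Sigma^1_2[a]$ family still arises from a branch of $T$ of height below $\kappa$, whence ``$r$ is almost disjoint from every branch of $T$'', secured in $V$ by the diagonalization, coincides in $L[a][r]$ with ``$r$ is almost disjoint from every member of $\mathcal A$''. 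Granting this, the two $\Pi^1_2[a,r]$ statements pass up to $V$ by Shoenfield absoluteness, yielding the contradiction and proving (1). Finally, (2) is an immediate consequence: any infinite $\mathbf{\Sigma}^1_2$ almost disjoint family is $\Sigma^1_2[a]$ for some parameter $a\in\omega^\omega$, and the hypothesis of (2) supplies $\aleph_1^{L[a]}<\aleph_1$ for that very $a$, so (1) applies and rules it out.
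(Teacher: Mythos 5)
Your reduction of (2) to (1) is fine, and your final absoluteness step (the two conditions on $r$ being $\Pi^1_2[a,r]$, hence transferable from $L[a][r]$ to $V$ by Shoenfield) is exactly the move the paper makes. But there is a genuine gap at the step you yourself flag with ``Granting this'', and it is not a technicality one can grant. The tree $T$ you diagonalize against is the Shoenfield tree computed in $L[a]$, living on $2\times\kappa$ with $\kappa=\aleph_1^{L[a]}$. Its projection \emph{in $V$} is only the set of $x\in\mathcal A$ admitting a witness whose associated wellfounded tree has rank $<\kappa$; since $\kappa$ is countable in $V$, this is in general a proper subset of $\mathcal A^V$ (a real $x$ coding a wellorder of type $\kappa$ already forces its witnesses' ranks above $\kappa$). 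So the real $r$ you build in $V$ is only known to be almost disjoint from $p[T]^V$, not from $\mathcal A^V$. Your proposed repair --- pass to $L[a][r]$ and argue $\mathcal A^{L[a][r]}=p[T]^{L[a][r]}\subseteq p[T]^V$ --- requires $\aleph_1^{L[a][r]}=\aleph_1^{L[a]}$, and nothing in a Rasiowa--Sikorski construction of $r$ in $V$ controls this: $L[a][r]$ is not a generic extension of $L[a]$ in any sense you have arranged, and the filter you pick could perfectly well yield an $r$ with $\aleph_1^{L[a][r]}>\kappa$, reintroducing high-rank members of $\mathcal A$ in $L[a][r]$ that $r$ was never tested against.

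The paper's proof is organized precisely to avoid this trap, and the difference is structural rather than cosmetic. Instead of diagonalizing in $V$ against a tree, it builds the covering family $(A^\beta_i)$ \emph{inside $L[a]$}, by running the diagonal-sequence analysis of the Solovay-model proof over all $\Coll(\omega,\aleph_1^{L[a]})$-names for members of $\mathcal A$; this anticipates every real satisfying $\varphi$ in every collapse extension, with no rank restriction. It then defines the ccc poset $\Q$ of Lemma \ref{l.souslineasy} in $L[a]$ and proves, as a forcing statement over $L[a]$ (by absorbing a $\Q$-generic into a $\Coll(\omega,\aleph_1^{L[a]})$-generic), that $1_\Q$ forces $x_H$ to be almost disjoint from every $\varphi$-real. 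Only at the very end is the hypothesis $\aleph_1^{L[a]}<\aleph_1$ used, to realize an actual $\Q$-generic $H$ over $L[a]$ inside $V$; since $L[a][H]$ \emph{is} a genuine generic extension, the forced statement holds there, and then your $\Pi^1_2$ upward absoluteness finishes the argument. If you want to salvage your approach, you essentially have to rebuild it in this form: the diagonalization data must be computed in $L[a]$ against names, not in $V$ against branches.
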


\medskip

{\bf (B)} Our notation adheres with great fidelity to the notation established in the references \cite{kechris95,jech03, kunen80}, and they form the proper background for the paper. In particular, the reader should consult \cite[section 2]{kechris95} regarding the notation we use when dealing with trees. It bears repeating that if $T$ is a tree then $[T]$ denotes the set of infinite branches through $T$, and if $t\in T$, then $T_{[t]}$ is the set of nodes in $T$ that are compatible with $t$. If $s,t\in T$, we write $s\perp t$ if $s$ and $t$ are incompatible, i.e., have no common extension in $T$. 

Throughout the paper, functions into $2=\{0,1\}$ are frequently identified with the sets that they are the characteristic functions of.

One notational peculiarity is the following: If $x$ is an ordered pair, $x=(y,z)$, we define $x_*=y$ and $x^*=z$, so that $x=(x_*,x^*)$. This notation is particularly useful when discussing trees on products of two sets.

\medskip

{\bf (C)} It was previously announced, along with the results in this paper, that the same techniques also could prove a number of parallel results about eventually different families of functions from $\omega$ to $\omega$, in particular that there are no analytic maximal eventually different families. Alas, that proof was irreparably flawed, and these questions remain open. \S 4 below contains a discussion of open problems.

\medskip

{\it Acknowledgement}: I wish to thank, above all, Katherine Thompson for many invaluable conversations about the ideas for this paper during her visits to Copenhagen and my visits to Bonn.

The research for this paper was generously supported by a Sapere Aude starting grant (level 2) from Denmark's Natural Sciences Research Council.

\section{Analytic and $\kappa$-Souslin almost disjoint families}\label{s.analytic}

In this section, we will first give a new proof of Mathias' result that there are no analytic infinite mad families, and subsequently generalize this proof to obtain a similar result about $\kappa$-Souslin mad families in the presence of Martin's axiom.

\subsection{Analytic almost disjoint families revisited}

\begin{theorem}[Mathias \cite{mathias77}]\label{t.mathias}
There are no infinite analytic maximal almost disjoint families in $\mathcal P(\omega)$.
\end{theorem}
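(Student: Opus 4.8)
The plan is to show that an arbitrary infinite analytic almost disjoint family $\mathcal A$ (which we may assume consists of infinite sets) fails to be maximal by \emph{directly constructing} a single infinite set $x\subseteq\omega$ that is almost disjoint from every member of $\mathcal A$. Such an $x$ cannot itself lie in $\mathcal A$, since it would then be almost disjoint from itself yet infinite, so $\mathcal A\cup\{x\}$ is a strictly larger almost disjoint family. Indeed, maximality is equivalent to the assertion that every infinite $x\subseteq\omega$ has infinite intersection with some member of $\mathcal A$, so producing one escaping $x$ is all that is needed. To set up, I would use that $\mathcal A$ is infinite to fix a countable pairwise almost disjoint sequence $a_0,a_1,\dots$ of distinct members to seed the construction, and record the representation $\mathcal A=p[T]$ for a tree $T$ on $2\times\omega$, so that membership in $\mathcal A$ is witnessed by a branch through $T$.

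The obvious first attempt is a $\sigma$-centered finite-condition forcing whose conditions are pairs $(s,F)$, with $s\subseteq\omega$ finite (an approximation to $x$) and $F\subseteq\mathcal A$ finite (members $x$ promises to avoid henceforth), ordered so that elements of $x$ added later miss $\bigcup F$. Meeting the dense sets $\{(s,F):|s|\ge n\}$ and $\{(s,F):a\in F\}$ then yields the desired $x$: the first family is dense because $\mathcal A$ being infinite forces $\bigcup F$ to be coinfinite, and the second is trivially dense. The trouble is that this scheme has one requirement \emph{per member}, hence potentially $2^{\aleph_0}$ of them, and an analytic family can have size continuum; so it cannot be driven by a mere recursion. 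The entire role of analyticity is to replace the continuum-many requirements ``$x$ is almost disjoint from $a$'', indexed by members, with only countably many requirements indexed by the nodes of $T$, of which there are just $\aleph_0$. This is precisely the step that, in the $\kappa$-Souslin generalization, will consume Martin's axiom at $\kappa$; for analytic families $\kappa=\aleph_0$ and ``$\mathrm{MA}(\aleph_0)$'' is just the Baire category theorem, so the construction degenerates to a plain recursion.

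Accordingly, I would redesign the poset so that its side conditions are finite subsets of $T$ rather than finite subsets of $\mathcal A$, attaching to each node $u\in T$ a dense set which, once met, permanently controls the contribution to $x\cap a$ of every member $a$ whose witnessing branch runs through $u$. Almost disjointness of $\mathcal A$ is then used twice: to make these node-indexed sets genuinely dense (so $x$ can always be extended while honoring its commitments), and to ensure that meeting all of them forces $x\cap a$ to be finite for \emph{every} $a\in\mathcal A$, not merely for the seeded $a_n$.

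The main obstacle is exactly this last point. A single node $u$ of $T$ lies below continuum-many branches, so freezing finitely much of $x$ beyond level $|u|$ cannot by itself keep $x$ almost disjoint from all of the corresponding members. To get around this I would run a fusion along $T$ that keeps only finitely many nodes ``active'' at each finite stage and uses the fact that distinct members of $\mathcal A$ are eventually disjoint to prune the tree, so that any member threatening to meet $x$ infinitely is eventually localized to a single active node whose requirement, once satisfied, defeats it. Verifying that this pruning keeps the active set finite at every stage---equivalently, that the node-indexed dense sets both exist and suffice---is the technical heart of the argument, and the place where almost disjointness does the real work.
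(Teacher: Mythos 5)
Your high-level strategy --- replace the continuum-many requirements ``$x\cap a$ is finite'' ($a\in\mathcal A$) by countably many requirements read off from a tree $T$ on $2\times\omega$ with $p[T]=\mathcal A$, and then diagonalize --- is exactly the shape of the paper's argument (and you are right that for analytic families the final diagonalization needs only a plain recursion, with Martin's Axiom entering only in the $\kappa$-Souslin case). But the proposal stops precisely where the real work begins, and the mechanism you sketch for the hard step does not work as stated. A member of $\mathcal A$ is never ``localized to a single active node'': in general every node of $T$ lies below continuum many branches, so no $\omega$-stage fusion keeping finitely many nodes active can, by freezing finitely much of $x$, defeat all members passing through a node. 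What actually makes node-based requirements viable is a stabilization claim you do not have: if $s,r\in T$ are incompatible in the first coordinate, then (because $p[T]$ is almost disjoint) they can be extended to $s',r'$ such that $s''_*\cap r''_*=s'_*\cap r'_*$ for \emph{all} further extensions $s''\supseteq s'$, $r''\supseteq r'$ in $T$ --- otherwise one builds two branches whose projections meet infinitely. This is what lets one bundle the continuum many members through a fixed node into a single set $A=\bigcup p[T_{[s']}]$ that acts as one requirement.

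Moreover, the countable family of requirements is not indexed by the nodes of $T$ and is not produced by a finite-stage pruning; it comes from a transfinite derivative analysis. One repeatedly extracts an infinite \emph{diagonal sequence} of pairwise first-coordinate-incompatible nodes with frozen intersections, sets $A^\alpha_i=\bigcup p[T^\alpha_{[t^0_i(\alpha)]}]$, passes to the subtree of nodes below which some branch escapes every finite union of the sets collected so far, and iterates; a rank argument (using that a tree with no infinite diagonal sequence has countable projection) shows this halts at some $\alpha^*<\omega_1$. Only then does one obtain a countable family $(A_n)$ with (i) every $a\in\mathcal A$ almost contained in finitely many $A_n$ and (ii) no finite union of the $A_n$ cofinite, after which choosing $k_n\notin\bigcup_{i<n}A_i$ finishes the proof. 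The concrete gap is that you have defined neither the covering family nor proved (i) or (ii); the step you defer as ``the technical heart'' is exactly the part where your proposed fusion would fail, and the correct replacement is the stabilization claim together with the ordinal analysis.
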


Our new proof of Theorem \ref{t.mathias} starts with a rather innocuous lemma. We note for future use that the proof only uses ZF+DC (and even then only a small part of this).

\begin{lemma}\label{l.easy}
Let $\mathcal A$ be a family of subsets of $\omega$, and suppose there is a sequence $(A_n)_{n\in\omega}$ of subsets of $\omega$ such that
\begin{enumerate}
\item every $x\in\mathcal A$ is almost contained in the union of finitely many $A_n$;
\item for all $n$, $\omega\setminus\bigcup_{i\leq n} A_i$ is infinite.
\end{enumerate}
Then there is an infinite $z\subseteq\omega$ such that $x\cap z$ is finite for all $x\in\mathcal A$.
\end{lemma}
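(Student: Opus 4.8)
The plan is to construct $z$ by a diagonal recursion, selecting one element at each stage from the complements that hypothesis (2) guarantees are infinite. First I would clean up condition (1): if $x$ is almost contained in $\bigcup_{n\in F}A_n$ for some finite $F\subseteq\omega$, then putting $N_x=\max F$ we get that $x\setminus\bigcup_{i\leq N_x}A_i$ is finite, since enlarging the index set to the initial segment $\{0,1,\dots,N_x\}$ only shrinks the difference. Thus every $x\in\mathcal A$ has a \emph{threshold} $N_x$ beyond which it is, up to a finite set, contained in $\bigcup_{i\leq N_x}A_i$.

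Next I would build a strictly increasing sequence $z_0<z_1<\cdots$ recursively: having chosen $z_0,\dots,z_{n-1}$, use (2) to pick $z_n\in\omega\setminus\bigcup_{i\leq n}A_i$ with $z_n>z_{n-1}$. This is possible precisely because $\omega\setminus\bigcup_{i\leq n}A_i$ is infinite and so contains elements above any prescribed bound. Set $z=\{z_n:n\in\omega\}$, which is infinite as the $z_n$ are distinct. This recursion is the only place where choice enters, via a weak fragment of DC, consistent with the remark preceding the lemma.

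Finally I would verify the conclusion. Fix $x\in\mathcal A$ with threshold $N=N_x$. For every $n\geq N$ we have $z_n\in\omega\setminus\bigcup_{i\leq n}A_i\subseteq\omega\setminus\bigcup_{i\leq N}A_i$, so $z_n\notin\bigcup_{i\leq N}A_i$. Hence any $z_n$ with $n\geq N$ that lies in $x$ must lie in the finite set $x\setminus\bigcup_{i\leq N}A_i$; since the $z_n$ are distinct, only finitely many such $n$ occur. Together with the at most $N$ elements $z_0,\dots,z_{N-1}$, this shows $x\cap z$ is finite, as required.

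I do not expect a serious obstacle: the argument is a clean diagonalization. The one point needing care is the bookkeeping in the last step, namely tracking that the inclusion of complements runs in the right direction (larger $n$ yields a smaller complement, all contained in $\omega\setminus\bigcup_{i\leq N}A_i$), so that the tail of $z$ past stage $N$ avoids $\bigcup_{i\leq N}A_i$ and is therefore absorbed into the finiteness supplied by (1).
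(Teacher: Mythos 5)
Your proof is correct and is essentially the paper's own argument: a diagonal recursion choosing $z_n$ from $\omega\setminus\bigcup_{i\leq n}A_i$ above all previous choices, followed by the observation that the tail of $z$ past the threshold for $x$ avoids the relevant finite union. The only cosmetic difference is that the paper takes the \emph{least} admissible element at each stage, making the recursion canonical and choice-free, whereas you invoke a fragment of DC; otherwise the two proofs coincide.
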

\begin{proof}
Let $k_0=0$, and for $n>0$ let $k_{n}$ be the least element in $\omega\setminus\bigcup_{i<n} A_i$ greater than $k_{n-1}$, and let $z=\{k_i:i\in\omega\}$. If $x\in\mathcal A$, find $n$ such that $x\subseteq^*\bigcup_{i<n} A_i$. Since $\{k_i:i\geq n\}\cap \bigcup_{i<n} A_i=\emptyset$ we have $|z\cap x|<\infty$.
\end{proof}

The strategy for proving Theorem \ref{t.mathias} is to produce a family $(A_n)$ as in Lemma \ref{l.easy} through an ordinal analysis of tree representations. The central definition is that of a diagonal sequence (Figure 1):

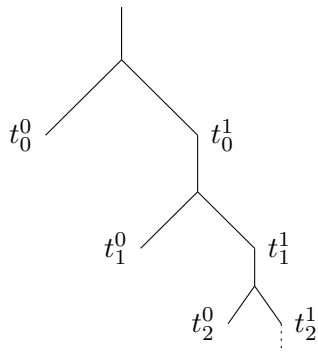
\begin{figure}\label{f.diagseq}
\centering
\begin{tikzpicture}[thick, scale=1]
	\begin{pgfonlayer}{nodelayer}
		\node [style=none] (9) at (-2, 4.7) {};
		\node [style=none] (0) at (-2, 4) {};
		\node [style=none] (1) at (-3, 3) [label=left:$t_0^0$] {};
		\node [style=none] (2) at (-1, 3) [label=right:$t_0^1$]{};
		\node [style=none] (10) at (-1,2.25) {};
		\node [style=none] (3) at (-1.75, 1.5) [label=left:$t_1^0$] {};
		\node [style=none] (4) at (-.25, 1.5) [label=right:$t_1^1$]{};
		\node [style=none] (5) at (-.25, 1) {};
		\node [style=none] (6) at (0.1, 0.5) [label=right:$t_2^1$]{};
		\node [style=none] (7) at (-0.6, 0.5) [label=left:$t_2^0$] {};
		\node [style=none] (8) at (0.1, 0.1) {};

	\end{pgfonlayer}
	\begin{pgfonlayer}{edgelayer}
		\draw (9.center) to (0.center);
		\draw (0.center) to (1.center);
		\draw (0.center) to (2.center);
		\draw (2.center) to (10.center);
		\draw (10.center) to (3.center);
		\draw (10.center) to (4.center);
		\draw (4.center) to (5.center);
		\draw (5.center) to (6.center);
		\draw (5.center) to (7.center);
		\draw[dotted] (6.center) to (8.center);
	\end{pgfonlayer}
\end{tikzpicture}
\caption{A diagonal sequence branching.} \label{fig.diagseq}
\end{figure}

\begin{definition}
Let $T$ be a tree on $2\times\omega$, $\alpha\leq\omega$. A sequence $t_i^{j}\in T$, $i<\alpha$, $j\in\{0,1\}$ is a \emph{diagonal sequence} (of length $\alpha$) in $T$ if
\begin{enumerate}
\item $\lh(t_i^0)=\lh(t_i^1)$;
\item $t_i^1\subseteq t_{i+1}^{j}$;
\item $t_i^0$ and $t_i^1$ are incompatible in the first coordinate;
\item For all $s, t\in T$ with $s\supseteq t_i^0$ and $t\supseteq t_i^1$ we have\footnote{When we write $s\cap t$ for some $s,t\in 2^{<\omega}$, we mean the intersection of the sets that $s$ and $t$ are the characteristic functions of.} $s_*\cap t_*=(t_i^0)_*\cap(t_i^1)_*$.
\item $p[T_{[t_i^0]}]\neq\emptyset$ for all $i\in\omega$.
\end{enumerate}

The diagonal sequence $(t^j_i)$ is called infinite or finite according to if $\alpha$ is infinite or finite. If $\alpha<\omega$, then we call the node $t^1_{\alpha-1}$ the \emph{top leaf} of the diagonal sequence. If a finite diagonal sequence in $T$ is extended by no strictly longer diagonal sequence then we say it is $T$-terminal. We let 
$$
\tau(T)=\{t\in T: t\text{ is top leaf of a } T\text{-terminal diagonal sequence}\}.
$$ 
\end{definition}

\begin{lemma}\label{l.diagseq}
If $T$ is a tree on $2\times\omega$ such that $p[T]$ is an uncountable almost disjoint family then $T$ admits an infinite diagonal sequence.
\end{lemma}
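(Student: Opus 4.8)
The plan is to build the infinite diagonal sequence by recursion on $i$, carrying as an invariant the requirement that $p[T_{[t_i^1]}]$ is uncountable. Since $p[T]$ is uncountable the recursion starts at the root, and the whole statement then reduces to a single \emph{splitting step}: given $u\in T$ with $p[T_{[u]}]$ uncountable, produce $t^0,t^1\supseteq u$ of a common length $\ell$, incompatible in the first coordinate (condition (3)), satisfying condition (4), with $p[T_{[t^0]}]\neq\emptyset$ (condition (5)) and with $p[T_{[t^1]}]$ still uncountable so the recursion may continue. Conditions (1)--(2) are arranged automatically by giving $t^0,t^1$ equal length and taking both above the previous $t_i^1$. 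Before starting I would replace $T$ by its pruning $\{t\in T:[T_{[t]}]\neq\emptyset\}$, which alters neither $[T]$ nor $p[T]$ and only shrinks the class of diagonal sequences; this lets me write, for a node $t$ of length $\ell$, the reachable set $S(t):=\{k:(\exists v\supseteq t\text{ in }T)\ v_*(k)=1\}$ and check that $S(t)\cap[\ell,\infty)=\bigcup\{w\cap[\ell,\infty):w\in p[T_{[t]}]\}$, so that condition (4) becomes exactly the assertion that $S(t^0)$ and $S(t^1)$ are \emph{disjoint above} $\ell$.

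Two preliminary observations would then organize the search for a good split. First, along any branch $b=(x,g)\in[T]$ the reachable sets decrease exactly to the point: $\bigcap_\ell S(b\restrict\ell)=x$. Indeed $x\subseteq S(b\restrict\ell)$ always, while any $k\notin x$ that is reachable above $b\restrict\ell$ would require some $v\supseteq b\restrict\ell$ with $v_*(k)=1$, impossible once $\ell>k$ since then $v_*(k)=x(k)=0$; hence $S(b\restrict\ell)\setminus x\subseteq[\ell,\infty)$, and running up a branch forces the first-coordinate reachable set toward a single almost-disjoint member. Second, among the \emph{uncountable} nodes (those $t$ with $p[T_{[t]}]$ uncountable) the first coordinate must split: if above an uncountable $u$ no two uncountable extensions were incompatible in the first coordinate, their first coordinates would form a chain with a single union $x$, so all but countably many members of $p[T_{[u]}]$ would share the first coordinate $x$, i.e.\ would coincide, contradicting that an almost disjoint family has distinct members. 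This yields first-coordinate-incompatible extensions keeping both sides uncountable --- but it does \emph{not yet} deliver condition (4).

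The heart of the matter, and what I expect to be the main obstacle, is upgrading such a raw split to one satisfying (4), which freezes the first-coordinate intersection for \emph{all} further extensions simultaneously, not merely along two chosen branches. The tension is structural: by the first observation, making $S(t^1)$ small enough to be separated pushes one up toward a single point and destroys uncountability on that side, whereas keeping $p[T_{[t^1]}]$ uncountable keeps $S(t^1)$ ``fat'', and then almost disjointness of the individual members does not bound the overlap $S(t^0)\cap S(t^1)$ above $\ell$, since that overlap is a union of finite pairwise intersections taken over uncountably many pairs. My plan to break the tension is to make the side $t^0$ \emph{thin} --- run it up toward a single member $y_0$ whose tail $y_0\cap[\ell,\infty)$ is \emph{disjoint} from $S(t^1)$ --- while the continuing side $t^1$ passes through an uncountable node produced as above; condition (4) then reduces to the single inclusion $y_0\cap S(t^1)\subseteq[0,\ell)$. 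Securing such a $y_0$ (a genuine member of the family, reachable through a node first-coordinate-incompatible with $t^1$, with tail avoiding $S(t^1)$) is the delicate point, and I would attack it by an ordinal/derivative analysis: iterate the first-coordinate splitting of uncountable nodes, at each stage peeling off the members whose reachable sets can be separated, and thereby stratify $p[T_{[u]}]$ so that some member is caught at a stage where its tail has been forced off the reachable set of the surviving uncountable part.

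Once the splitting step is in hand the recursion immediately produces the infinite diagonal sequence, since (1)--(3) and (5) are guaranteed by the construction of each step and only (4) carries content. I would therefore isolate the splitting step as a separate lemma and prove Lemma~\ref{l.diagseq} as a one-line recursion on top of it, with essentially all of the difficulty concentrated in establishing (4) while preserving uncountability of the continuing branch.
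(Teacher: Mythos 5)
Your proposal correctly isolates the hard point, condition (4), and your two preliminary observations are fine; but the step that carries all of the content --- the ``splitting step'' producing a frozen pair with $p[T_{[t^1]}]$ still uncountable --- is never actually proved. The concluding ``ordinal/derivative analysis \ldots peeling off the members whose reachable sets can be separated'' describes a hope, not an argument. Worse, the invariant you insist on is likely not achievable in a single step. If $p[T_{[u]}]$ contains a Luzin-type configuration --- a member $y_0$ together with uncountably many $z_\alpha$ such that each $y_0\cap z_\alpha$ is finite but $\bigcup_{\alpha\in B}(y_0\cap z_\alpha)$ is infinite for every uncountable $B$ --- then no node $t^1$ whose projection contains uncountably many of the $z_\alpha$ can ever be frozen against a node reaching $y_0$, no matter how far either side is extended: this is exactly the ``fat $S(t^1)$'' problem you name, and thinning only the $t^0$ side does not escape it. (That \emph{analytic} families cannot be globally Luzin-like is in effect a consequence of the lemma, so you cannot presuppose it.)

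The missing idea is that one should not try to preserve uncountability along the sequence at all. The paper's engine is a claim you never state: for \emph{any} $s,r\in T$ incompatible in the first coordinate there are extensions $s'\supseteq s$, $r'\supseteq r$ at which the first-coordinate intersection freezes for all further extensions --- otherwise one builds increasing chains $s_0\subseteq s_1\subseteq\cdots$ and $r_0\subseteq r_1\subseteq\cdots$ whose limits give two distinct elements of $p[T]$ with infinite intersection, contradicting almost disjointness. Granting this, a finite diagonal sequence fails to extend only if, after pruning, no two nodes above its top leaf are first-coordinate incompatible, i.e.\ only if $|p[T_{[t^1_n]}]|\leq 1$. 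The paper then argues by contraposition: assuming no infinite diagonal sequence exists, it iterates the derivative $T^{\alpha+1}=T^\alpha\setminus\{t: t\supseteq s$ for some $s\in\tau(T^\alpha)\}$, which reaches $\emptyset$ below $\omega_1$, and writes $p[T]$ as a countable union of the sets $p[T^\alpha_{[t]}]$ with $t\in\tau(T^\alpha)$, each of size at most $1$; hence $p[T]$ is countable. So the derivative analysis belongs in the proof of the lemma itself, applied to terminal diagonal sequences, not inside a splitting step that keeps one side uncountable. As written, your proof has a genuine gap exactly at the step you flagged as delicate.
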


\begin{proof} Fix $T$. Recall that $s,r\in T$ are said to be \emph{incompatible in the first coordinate} if $s_*\perp r_*$.

\medskip

{\bf Claim.} Suppose $t,s,r\in T$ are such that $s,r\supseteq t$ and $s,r$ are incompatbile in the first coordinate. Then there are $s',t'\in T$ with $s'\supseteq s$ and $t'\supseteq t$ such that for all $s'',t''\in T$ with $s''\supseteq s'$ and $t''\supseteq t'$ we have $s''_*\cap t''_*=s'_*\cap t'_*$.

\medskip

{\it Proof of the claim}: Otherwise we can find sequences
$$
s=s_0\subseteq s_1\subseteq\cdots\subseteq s_n\subseteq\cdots
$$
and
$$
t=t_0\subseteq t_1\subseteq\cdots\subseteq t_n\subseteq\cdots
$$
in $T$ with $|{s_i}_*\cap {t_i}_*|=|{s_0}_*\cap {t_0}_*|+i$ for all $i$. Let $x=\bigcup_{i} (s_i)_*$ and $y=\bigcup_{i} (t_i)_*$. Then $x,y\in p[T]$ and $|x\cap y|=\infty$, contradicting that $p[T]$ is an almost disjoint family.\hfill {\tiny Claim.}$\dashv$

\medskip

Notice that the claim implies that if $(t_i^j)_{i\leq n}$ is a $T$-terminal finite diagonal sequence then $|p[T_{t^1_n}]|\leq 1$. (To see this, prune $T_{[t^1_n]}$ and apply the claim.)

Suppose then that there are no infinite diagonal sequences in $T$. Let $T^0=T$. If $T^\alpha$ has been defined for some ordinal $\alpha$, let
$$
T^{\alpha+1}=T^\alpha\setminus \{t\in T^{\alpha}: (\exists s\in \tau(T^\alpha))\ t\supseteq s\}.
$$
At limit stages, let $T^\lambda=\bigcap_{\alpha<\lambda} T^\alpha$. Since $T$ is countable and has no infinite diagonal sequences, we must have $T^\beta=\emptyset$ for some $\beta<\omega_1$. The claim above implies that $|p[T^\alpha_t]|\leq 1$ when $t\in\tau(T^\alpha)$, and since
$$
p[T]=\bigcup_{\alpha<\beta}\bigcup_{t\in \tau(T^\alpha)} p[T^\alpha_{[t]}],
$$
it follows that $p[T]$ is countable.
\end{proof}

\begin{proof}[Proof of Theorem \ref{t.mathias}]

Fix a tree $T$ on $2\times\omega$ such that $p[T]$ is an uncountable almost disjoint family. We will find an ordinal $\alpha^*<\omega_1$, and

\begin{enumerate}[label={(\Alph*)}]
\item for each $\alpha\leq\alpha^*$, a tree $T^\alpha\subseteq T$ (unrelated to the definition in the previous lemma!);

\item for each $\alpha<\alpha^*$, an infinite diagonal sequence $(t_i^j(\alpha))$ in $T^\alpha$;

\item sets $A^{\alpha}_i\subseteq\omega$, for $i\in\omega, \alpha\leq\alpha^*$.
\end{enumerate}
such that the following are satisfied:
\begin{enumerate}
\item $T^0=T$;

\item $A_i^\alpha=\bigcup p[T^\alpha_{[t_i^0(\alpha)]}]$;

\item for $\alpha<\alpha^*$,
\begin{align*}
T^{\alpha+1}=\{t\in T^{\alpha}:&(\exists x\in p[T^{\alpha}_{[t]}])(\forall k)(\forall \alpha_0,\ldots,\alpha_{k-1}<\alpha)\\
&(\forall i_0,\ldots, i_{k-1}\in\omega)\ x\not\subseteq^*\bigcup_{j<k} A^{\alpha_j}_{i_j}\};
\end{align*}

\item for $\lambda\leq\alpha^*$ a limit ordinal, $T^\lambda=\bigcap_{\alpha<\lambda} T^\alpha$;

\item $T^{\alpha^*}$ admits no infinite diagonal sequences.
\end{enumerate}
(In (2), the set $p[T^\alpha_{[t_i^0(\alpha)]}]$ should be thought of as a family of subsets of $\omega$, so that $A^{\alpha}_i\subseteq\omega$.)

Notice that $t^j_i(\alpha)\notin T^{\alpha+1}$. Lemma \ref{l.diagseq} and the fact that $T$ is countable implies that we can find $\alpha^*<\omega_1$, $T^\alpha$, $t^j_i(\alpha)$ and $A^{\alpha}_i$ as required by (A)--(C) and (1)--(5). Let $A^{\alpha^*}_i$ enumerate the countable set $p[T^{\alpha^*}]$, if need be letting $A^{\alpha^*}_i=\emptyset$ when $i\geq|p[T^{\alpha^*}]|$. Let $(A_i)_{i\in\omega}$ enumerate the countably infinite family $\{A^\alpha_i:\alpha\leq\alpha^*, i<\omega\}$. We claim that conditions (1) and (2) of Lemma \ref{l.easy} are satisfied.

For (1), note that if $x\in p[T]$ then either $x\in p[T^{\alpha^*}]$ (and we are done), or else there is some least $\alpha<\alpha^*$ such that $x\notin p[T^{\alpha}]$. Assume the latter is the case. Let $y\in\omega^\omega$ be such that $(x,y)\in [T]$. Since $(x,y)\notin [T^\alpha]$, there must be some $\beta<\alpha$ and $n\in\omega$ such that $(x\restrict n,y\restrict n)\in T^\beta$ but $(x\restrict n,y\restrict n)\notin T^{\beta+1}$. Then (3) gives that $x$ is covered by finitely many $A_i^{\delta}$, $\delta\leq\beta$, $i\in\omega$.

For (2), consider finitely many sets $A^{\alpha_0}_{i_0},\ldots, A^{\alpha_k}_{i_k}$. Let 
$$
\beta=\max\{\alpha_j:j\leq k\wedge\alpha_{i_j}<\alpha^*\},
$$
and let $l=\max\{i_j:j\leq k\}$. There is $x\in T^\beta_{[t^0_{l+1}]}$ which is not almost contained in $\bigcup\{A_{i_j}^{\alpha_j}:\alpha_j<\beta\}$. By definition, $x$ is almost disjoint from the sets $A^{\beta}_i$ when $i\leq l$, and $x$ is almost disjoint from all $A^{\alpha^*}_i$, $i\in\omega$. Thus
$$
x\setminus\bigcup_{j\leq k} A^{\alpha_j}_{i_j}
$$
is infinite, which shows that (2) of Lemma \ref{l.easy} holds.
\end{proof}

\subsection{$\boldsymbol{\kappa}$-Souslin almost disjoint families.} The previous proof has the following straight-forward generalization to $\kappa$-Souslin almost disjoint families.

\begin{theorem}\label{t.kappasouslin}
If $\MA(\kappa)$ holds for some $\kappa<2^{\aleph_0}$ then no $\kappa$-Souslin almost disjoint family is maximal.
\end{theorem}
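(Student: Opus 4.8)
The plan is to mimic the proof of Theorem \ref{t.mathias} (the analytic case) almost verbatim, replacing the countability of the tree $T$ with a Martin's axiom argument at each stage. A $\kappa$-Souslin almost disjoint family is of the form $p[T]$ for a tree $T$ on $2\times\kappa$, and the entire ordinal analysis of the previous proof — the definition of diagonal sequences, the sets $A_i^\alpha$, the iterated pruning construction producing $T^\alpha$ and ultimately a family $(A_n)$ satisfying the hypotheses of Lemma \ref{l.easy} — is combinatorial and makes no essential use of countability \emph{except} at two points. First, Lemma \ref{l.diagseq} used countability of $T$ to conclude that the decreasing sequence $(T^\alpha)$ stabilizes at $\emptyset$ below $\omega_1$, and to conclude that a union of singletons indexed by a countable set is countable. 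Second, in the proof of Theorem \ref{t.mathias} itself, the bound $\alpha^*<\omega_1$ and the countability of $p[T^{\alpha^*}]$ relied on $T$ being countable. The job is therefore to recover analogues of these facts from $\MA(\kappa)$.

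First I would state and prove the $\kappa$-Souslin analogue of Lemma \ref{l.diagseq}: if $T$ is a tree on $2\times\kappa$ with $\kappa<2^{\aleph_0}$, $\MA(\kappa)$ holds, and $p[T]$ is an almost disjoint family of size $>\kappa$, then $T$ admits an infinite diagonal sequence. The Claim inside Lemma \ref{l.diagseq} goes through unchanged, since it only uses that $p[T]$ is almost disjoint. The transfinite pruning producing $T^{\alpha+1}$ from $T^\alpha$ by deleting extensions of terminal top leaves is also unchanged. What must be replaced is the counting: if there were no infinite diagonal sequence, then $p[T]=\bigcup_{\alpha<\beta}\bigcup_{t\in\tau(T^\alpha)}p[T^\alpha_{[t]}]$ is a union of at most $\kappa$ many singletons (the number of nodes $t$ is at most $\card(T)\le\kappa$, and $\card(\beta)\le\kappa$), hence $\card(p[T])\le\kappa$. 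This is where $\MA(\kappa)$ enters to control the ordinal height: since each $T^\alpha\subsetneq T^{\alpha+1}$ removes at least one node and $\card(T)\le\kappa$, the construction must terminate at some $\beta$ with $\card(\beta)\le\kappa$, so $\card(p[T])\le\kappa<2^{\aleph_0}$ — contradicting $\card(p[T])>\kappa$. Thus a family of size exceeding $\kappa$ forces an infinite diagonal sequence.

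Next I would run the main construction exactly as in the proof of Theorem \ref{t.mathias}, producing $T^\alpha$, diagonal sequences $(t_i^j(\alpha))$, and sets $A_i^\alpha$ satisfying (A)--(C) and (1)--(5), now with $\alpha^*$ of cardinality at most $\kappa$ rather than $\alpha^*<\omega_1$ (again because each successor step strictly shrinks the $\le\kappa$-sized tree). The family $\{A_i^\alpha:\alpha\le\alpha^*,\,i<\omega\}$ together with an enumeration of the $\le\kappa$-sized residual set $p[T^{\alpha^*}]$ now has cardinality at most $\kappa$, rather than being countable. The verifications of conditions (1) and (2) of Lemma \ref{l.easy} are purely combinatorial and transfer verbatim; in particular condition (2) uses only the diagonal-sequence property (4) and the construction of $T^{\alpha+1}$ via (3), neither of which depends on cardinality.

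The main obstacle is that Lemma \ref{l.easy} as stated is tailored to a \emph{countable} sequence $(A_n)_{n\in\omega}$, whereas the construction now yields a family $\mathcal F=\{A_\xi:\xi<\kappa\}$ of size $\kappa$. This is precisely where $\MA(\kappa)$ does its real work: I must prove a $\kappa$-sized strengthening of Lemma \ref{l.easy}, namely that if $\mathcal F$ is a family of size $\kappa$ such that every $x\in p[T]$ is almost contained in finitely many members of $\mathcal F$ and every finite subunion has infinite complement, then there is an infinite $z\subseteq\omega$ almost disjoint from every $x\in p[T]$. The standard route is a ccc forcing / almost-disjoint-coding poset $\mathbb P$ whose conditions are pairs $(s,\mathcal E)$ with $s\in 2^{<\omega}$ a finite approximation to $z$ and $\mathcal E\in[\mathcal F]^{<\omega}$ a finite promise, ordered so that generic filters meeting the $\kappa$ many dense sets $D_n=\{(s,\mathcal E):\card(\supp(s))\ge n\}$ and $D_{A}=\{(s,\mathcal E):A\in\mathcal E\}$ yield an infinite $z$ almost disjoint from each $A_\xi$; condition (2) guarantees each $D_n$ is dense and ccc-ness follows from the finiteness of the promises. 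Since these constitute only $\kappa<2^{\aleph_0}$ dense sets, $\MA(\kappa)$ furnishes the required filter, and the resulting $z$ witnesses that $p[T]$ is not maximal. The delicate point to get right is that the forcing produces a $z$ almost disjoint from every \emph{member} of $\mathcal F$, and then one must check — using that each $x\in p[T]$ is almost contained in a \emph{finite} subunion of $\mathcal F$ — that $z$ is in fact almost disjoint from every $x\in p[T]$, completing the argument that $p[T]$ admits a proper almost-disjoint extension.
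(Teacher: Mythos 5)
Your proposal is correct and follows essentially the same route as the paper: the paper likewise proves a $\kappa$-indexed version of Lemma \ref{l.easy} via the ccc poset of pairs $(s,\mathcal F)$ and $\MA(\kappa)$ (Lemma \ref{l.souslineasy}), proves the analogue of Lemma \ref{l.diagseq} by observing that the pruning terminates below $\kappa^+$ so that a tree with no infinite diagonal sequence has $|p[T]|\le\kappa$ (Lemma \ref{l.diagseqsouslin}), and then reruns the ordinal analysis unchanged, disposing of the case $|p[T]|\le\kappa$ by the standard fact that under $\MA(\kappa)$ no almost disjoint family of size $\le\kappa$ is maximal. One cosmetic slip: the termination of the pruning below $\kappa^+$ is pure cardinality counting and does not use $\MA(\kappa)$, which is needed only for the generic filter meeting the $\kappa$ many dense sets, exactly as you say later.
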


We refer to \cite{kunen80} for background on MA (Martin's Axiom). The steps of the proof are almost entirely parallel to the analytic case, so we will provide only a sketch.

\begin{lemma}\label{l.souslineasy}
Assume $\MA(\kappa)$ holds for some $\kappa<2^{\aleph_0}$. Let $\mathcal A$ be a family of subsets of $\omega$, and suppose there is a sequence $(A_\vartheta)_{\vartheta\in\kappa}$ of subsets of $\omega$ such that
\begin{enumerate}
\item every $x\in\mathcal A$ is almost contained in the union of finitely many $A_\vartheta$;
\item For all $n$, if $\vartheta_0,\ldots,\vartheta_{n-1}\in\kappa$, then $\omega\setminus\bigcup_{i<n}A_\vartheta$ is infinite.
\end{enumerate}
Then there is an infinite $z\subseteq\omega$ such that $x\cap z$ is finite for all $x\in\mathcal A$.
\end{lemma}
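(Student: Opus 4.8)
The goal is to prove Lemma~\ref{l.souslineasy}, which is the $\kappa$-Souslin analogue of Lemma~\ref{l.easy} but now uses Martin's Axiom to replace the explicit diagonalization argument. The plan is to build the infinite set $z$ as a subset of $\omega$ meeting every $A_\vartheta$ only finitely often, and the natural tool is a forcing/MA argument with finite conditions. Let me sketch how I would set this up.

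\textbf{The forcing poset.} First I would define a partial order $\mathbb P$ whose conditions are pairs $p=(s_p,F_p)$, where $s_p\in[\omega]^{<\omega}$ is a finite subset of $\omega$ (the finite approximation to $z$ built so far) and $F_p\in[\kappa]^{<\omega}$ is a finite set of indices (the indices whose $A_\vartheta$ we have committed to being almost disjoint from $z$). The ordering is the expected one: $q\leq p$ iff $s_q\supseteq s_p$, $F_q\supseteq F_p$, and crucially $s_q\setminus s_p$ is disjoint from $\bigcup_{\vartheta\in F_p}A_\vartheta$ --- that is, once we have promised to avoid the sets indexed by $F_p$, any new elements we add to the approximation of $z$ must lie outside their union. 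I would then check that $\mathbb P$ has the countable chain condition: conditions with the same first coordinate $s_p$ are compatible (take the union of their $F_p$'s), and there are only countably many possible values of $s_p$, so any antichain is countable.

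\textbf{The dense sets.} The point of hypothesis~(2) of the lemma is exactly to make the relevant sets dense. For each $\vartheta\in\kappa$ I would define $D_\vartheta=\{p\in\mathbb P:\vartheta\in F_p\}$; given any $p$ we may extend it by throwing $\vartheta$ into $F_p$ without changing $s_p$, so $D_\vartheta$ is dense. For each $n\in\omega$ I would define $E_n=\{p\in\mathbb P:|s_p|\geq n\}$ to guarantee $z$ is infinite; to see $E_n$ is dense I would invoke hypothesis~(2), which says $\omega\setminus\bigcup_{\vartheta\in F_p}A_\vartheta$ is infinite for the finitely many $\vartheta\in F_p$, so there is always a fresh element outside $\bigcup_{\vartheta\in F_p}A_\vartheta$ to adjoin to $s_p$ while respecting the ordering. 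This family of dense sets has size $\kappa<2^{\aleph_0}$, so $\MA(\kappa)$ yields a filter $G$ meeting every $D_\vartheta$ and every $E_n$. Setting $z=\bigcup_{p\in G}s_p$, the sets $E_n$ force $z$ to be infinite. Finally, for each $\vartheta$ pick $p\in G\cap D_\vartheta$: every $q\in G$ below $p$ adds only elements outside $A_\vartheta$, and by the filter property every element of $z$ enters via some such $q$, so $z\cap A_\vartheta\subseteq s_p\cap A_\vartheta$ is finite. Since by hypothesis~(1) every $x\in\mathcal A$ is almost contained in a finite union $\bigcup_{j<n}A_{\vartheta_j}$, it follows that $x\cap z$ is finite for all $x\in\mathcal A$, as required.

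\textbf{The main obstacle.} The genuinely delicate point is verifying that $z\cap A_\vartheta$ is finite rather than merely that the elements added \emph{after} committing to $\vartheta$ avoid $A_\vartheta$. The subtlety is that $z$ could a priori have picked up elements of $A_\vartheta$ at stages of the generic construction that occurred \emph{before} $\vartheta$ entered any $F_p$, and one must argue that these amount to only finitely many. This is handled precisely by the filter structure: because $G$ is directed, every element of $z$ and the particular condition $p\in G\cap D_\vartheta$ share a common lower bound in $G$, so all of $z$ except the finite set $s_p$ is added below $p$ and hence avoids $A_\vartheta$, pinning $z\cap A_\vartheta$ inside the finite set $s_p$. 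A secondary routine check that I would not belabor is the ccc verification, which is where $\kappa<2^{\aleph_0}$ interacts with the applicability of $\MA(\kappa)$; since MA is usually stated for ccc posets, establishing ccc is what licenses the whole argument.
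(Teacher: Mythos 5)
Your proposal is correct and follows essentially the same route as the paper: the same finite-condition poset (pairs of a finite approximation to $z$ and a finite set of indices, with the same ordering), the same ccc observation, the same two families of dense sets, and the same application of $\MA(\kappa)$. The only cosmetic differences are that the paper takes the first coordinate in $2^{<\omega}$ rather than $[\omega]^{<\omega}$ and indexes the density requirements by finite sets of indices rather than single ordinals, neither of which changes the argument.
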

\begin{proof}
Let $\Q$ be the forcing poset with conditions being pairs $(s,\mathcal F)$ where $s\in 2^{<\omega}$ and $\mathcal F$ a finite subsets of $\{A_\vartheta:\vartheta<\kappa\}$, and the ordering $(s,\mathcal F)\leq_{\P} (s',\mathcal F')$ just in case $s\supseteq s'$ and $\mathcal F\supseteq\mathcal F'$, and
$$
s\setminus s'\subseteq (\omega\setminus \bigcup\mathcal F').
$$
Since any two conditions which agree on the first coordinate are compatible, $\Q$ has the ccc. Notice that
$$
D_{\mathcal F'}=\{(s,\mathcal F): \mathcal F\supseteq\mathcal F'\}
$$
is dense in $\Q$ for any finite $\mathcal F'\subseteq\{A_\vartheta:\vartheta<\kappa\}$, and so is
$$
D_n=\{(s,\mathcal F): |s|\geq n\}
$$
for any $n\in\omega$. (Here $|s|$ denotes the cardinality of the set that $s$ is formally the characteristic function of.)

Since $\MA(\kappa)$ holds, there is a filter $G$ on $\Q$ meeting the $\kappa$ dense sets above. Let
$$
z=\bigcup \{s\in 2^{<\omega}: (\exists\mathcal F) (s,\mathcal F)\in G\}.
$$
This $z$ is as required.
\end{proof}

The notion of a diagonal sequence in a tree on $2\times\kappa$ is defined exactly as it was for a tree on $2\times\omega$.

\begin{lemma}\label{l.diagseqsouslin}
If $T$ is a tree on $2\times\kappa$ such that $p[T]$ is an almost disjoint family of size $>\kappa$ then $T$ admits an infinite diagonal sequence.
\end{lemma}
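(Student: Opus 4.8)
The plan is to follow the proof of Lemma \ref{l.diagseq} essentially verbatim, changing only the cardinal arithmetic that governs the length of the pruning process and the size of the final decomposition.

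First I would verify that the stabilization Claim from the proof of Lemma \ref{l.diagseq} survives the passage from $2\times\omega$ to $2\times\kappa$ unchanged: given $t,s,r\in T$ with $s,r\supseteq t$ incompatible in the first coordinate, there exist extensions $s'\supseteq s$ and $t'\supseteq r$ in $T$ such that $u_*\cap v_*=s'_*\cap t'_*$ for all $u\supseteq s'$, $v\supseteq t'$ in $T$. The proof is identical—were no such stabilization available, one would build two branches of $[T]$ whose first-coordinate projections lie in $p[T]$ and meet infinitely—and this argument uses only the almost disjointness of $p[T]$, never the countability of the second coordinate. In particular, as before, a $T$-terminal finite diagonal sequence with top leaf $t_n^1$ satisfies $|p[T_{[t_n^1]}]|\leq 1$.

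Next I would run the same ordinal-indexed pruning, setting $T^0=T$, removing at successor stages all nodes extending a top leaf in $\tau(T^\alpha)$, and intersecting at limits. The key point is that $T$ is now a tree on $2\times\kappa$, so $|T|\leq\kappa$ (as $\kappa$ is infinite). Assuming toward a contradiction that $T$ has no infinite diagonal sequence, whenever $|p[T^\alpha]|\geq 2$ there is a diagonal sequence of positive length in $T^\alpha$, hence a $T^\alpha$-terminal one, so $\tau(T^\alpha)\neq\emptyset$ and the pruning strictly shrinks $T^\alpha$. Since a strictly decreasing sequence of subsets of a set of size $\kappa$ has length $<\kappa^+$, the process reaches some $\beta<\kappa^+$ at which it can no longer shrink, whence $|p[T^\beta]|\leq 1$.

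Finally I would assemble the decomposition
$$
p[T]=p[T^\beta]\cup\bigcup_{\alpha<\beta}\bigcup_{t\in\tau(T^\alpha)} p[T^\alpha_{[t]}].
$$
Each inner set has size $\leq 1$ by the Claim, there are at most $\kappa$ values of $\alpha<\beta$, and for each $\alpha$ we have $|\tau(T^\alpha)|\leq|T|\leq\kappa$; hence the union has at most $\kappa\cdot\kappa=\kappa$ members, giving $|p[T]|\leq\kappa$ and contradicting the hypothesis that $p[T]$ has size $>\kappa$. The only genuine content beyond the analytic case is this bookkeeping—checking that $|T|\leq\kappa$ forces termination below $\kappa^+$ and that $\kappa\cdot\kappa=\kappa$ keeps the decomposition of size $\leq\kappa$. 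I expect no real obstacle, since the combinatorial heart (the stabilization Claim) is entirely cardinal-insensitive; the only mild care needed is to confirm that the pruning genuinely strictly decreases until $p[T^\alpha]$ is a singleton, exactly as in the countable case.
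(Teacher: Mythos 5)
Your proposal is correct and is essentially the paper's own argument: the paper proves this lemma simply by noting that the proof of Lemma \ref{l.diagseq} carries over verbatim, with the pruning now terminating at some $\beta<\kappa^+$ and yielding $|p[T]|\leq\kappa$ when no infinite diagonal sequence exists. If anything, your version is slightly more careful than the paper's, since you stop the pruning when $|p[T^\beta]|\leq 1$ and include $p[T^\beta]$ in the final decomposition rather than asserting that $T^\beta$ becomes empty.
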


The proof of the this lemma is entirely parallel to that of Lemma \ref{l.diagseq}, except for the fact that in the ordinal analysis, $T^\beta=\emptyset$ will happen at some $\beta<\kappa^+$, and so the proof instead shows that if $T$ does not admit any infinite diagonal sequences then $|p[T]|\leq\kappa$.

\begin{proof}[Proof of Theorem \ref{t.kappasouslin}]
Fix a tree $T$ on $2\times\kappa$ and assume $p[T]$ is almost disjoint. It is well known that under MA$(\kappa)$ no almost disjoint family of size $\leq \kappa$ is maximal, see e.g. \cite{kunen80}. So we may assume that $|p[T]|>\kappa$.

The ordinal analysis in the proof of Theorem \ref{t.mathias} goes through unchanged using Lemma \ref{l.diagseqsouslin} instead of Lemma \ref{l.diagseq}, except that it will now end at some $\alpha^*<\kappa^+$. The rest of the proof then goes through with the only change being that Lemma \ref{l.souslineasy} is used instead of Lemma \ref{l.easy}. 
\end{proof}

\begin{corollary}
$\MA(\aleph_1)$ implies that there are no $\mathbf{\Sigma}^1_2$ mad families.
\end{corollary}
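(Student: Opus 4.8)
The plan is to reduce the corollary to Theorem~\ref{t.kappasouslin} by invoking the classical tree representation of $\mathbf{\Sigma}^1_2$ sets. The essential input is \emph{Shoenfield's theorem}: every $\mathbf{\Sigma}^1_2$ subset of $2^\omega$ is $\aleph_1$-Souslin, i.e.\ equal to $p[T]$ for some tree $T$ on $2\times\aleph_1$, and this representation is uniform in the real parameter defining the set (see \cite{kechris95, jech03}). Granting this, a $\mathbf{\Sigma}^1_2$ almost disjoint family is in particular an $\aleph_1$-Souslin almost disjoint family, which is exactly the kind of object Theorem~\ref{t.kappasouslin} governs.

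First I would check that the hypothesis $\kappa<2^{\aleph_0}$ of Theorem~\ref{t.kappasouslin} is automatically satisfied for the choice $\kappa=\aleph_1$ under our standing assumption. This is a standard fact about Martin's axiom: $\MA(2^{\aleph_0})$ is outright false, so $\MA(\aleph_1)$ cannot hold under the continuum hypothesis; hence $\MA(\aleph_1)$ entails $\aleph_1<2^{\aleph_0}$. Thus no extra case analysis is needed to meet the cardinal-arithmetic hypothesis.

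With these two observations in hand the corollary is immediate: given a $\mathbf{\Sigma}^1_2$ almost disjoint family $\mathcal A$, fix by Shoenfield's theorem a tree $T$ on $2\times\aleph_1$ with $p[T]=\mathcal A$, and apply Theorem~\ref{t.kappasouslin} with $\kappa=\aleph_1$ to conclude that $\mathcal A$ is not maximal. I do not expect any genuine obstacle here, since the argument is a direct specialization of the preceding theorem and the only external input is the Shoenfield tree. The one subtlety worth flagging is the boldface/lightface distinction: the lightface version would require the parameter-free Shoenfield tree, whereas here the statement is boldface, so the uniform Shoenfield representation absorbs the defining real parameter into the $\aleph_1$-Souslin representation without any need to treat parameters separately.
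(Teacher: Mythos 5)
Your proposal is correct and matches the paper's proof, which consists of the single observation that every $\mathbf{\Sigma}^1_2$ set is $\aleph_1$-Souslin (via Shoenfield), after which Theorem \ref{t.kappasouslin} applies with $\kappa=\aleph_1$. Your additional check that $\MA(\aleph_1)$ forces $\aleph_1<2^{\aleph_0}$ is a standard point the paper leaves implicit, and is fine.
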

\begin{proof}
Since every $\mathbf{\Sigma}^1_2$ set is $\aleph_1$-Souslin (see \cite{moschovakis80}).
\end{proof}

More information about $\Sigma^1_2$ almost disjoint families will be obtained in the next section where we will prove Theorem \ref{t.intro12}.

\medskip

The proof of Theorem \ref{t.mathias} and \ref{t.kappasouslin} also shows the following:

\begin{corollary}
If $\mathcal A\subset\mathcal P(\omega)$ is an infinite $\kappa$-Souslin almost disjoint family, then $\mathcal A$ is contained in a proper $\kappa$-generated ideal on $\omega$. 
\end{corollary}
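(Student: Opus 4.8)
The plan is to observe that the family $(A^\alpha_i)$ manufactured during the proof of Theorem \ref{t.kappasouslin} already constitutes a set of $\kappa$ generators for the desired ideal, and that the two properties established there---conditions (1) and (2) of Lemma \ref{l.souslineasy}---are precisely the containment and properness requirements. The crucial point is that the \emph{construction} of the $A^\alpha_i$ and the verification of (1) and (2) never invoke Martin's axiom: $\MA(\kappa)$ enters only at the final step, through Lemma \ref{l.souslineasy}, to produce the diagonalizing set $z$. Since the corollary makes no appeal to MA, I would run the ordinal analysis and then simply stop short of applying Lemma \ref{l.souslineasy}.

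Concretely, fix a tree $T$ on $2\times\kappa$ with $p[T]=\mathcal A$. Running the construction from the proof of Theorem \ref{t.kappasouslin} produces an ordinal $\alpha^*<\kappa^+$ and sets $A^\alpha_i$ (for $\alpha\leq\alpha^*$, $i<\omega$); re-indexing the collection $\{A^\alpha_i:\alpha\leq\alpha^*,\ i<\omega\}$ as $\{A_\vartheta:\vartheta<\kappa\}$, whose size is at most $|\alpha^*|\cdot\aleph_0\leq\kappa$, we retain the two properties
\begin{enumerate}
\item every $x\in\mathcal A$ is almost contained in a finite union of the $A_\vartheta$;
\item for every finite $F\subseteq\kappa$, the set $\omega\setminus\bigcup_{\vartheta\in F}A_\vartheta$ is infinite.
\end{enumerate}
I would then let $\mathcal I$ be the ideal generated by $\{A_\vartheta:\vartheta<\kappa\}$ together with the finite subsets of $\omega$, namely
$$
\mathcal I=\Bigl\{y\subseteq\omega: y\subseteq^*\bigcup_{\vartheta\in F}A_\vartheta\text{ for some finite }F\subseteq\kappa\Bigr\}.
$$
This is visibly closed under subsets and finite unions, and is generated by the $\kappa$ sets $A_\vartheta$ together with countably many singletons, hence is $\kappa$-generated. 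Property (1) gives $\mathcal A\subseteq\mathcal I$, while property (2) gives $\omega\notin\mathcal I$, so $\mathcal I$ is proper.

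The only point requiring care is the degenerate case in which the ordinal analysis terminates at $\alpha^*=0$, which happens exactly when $T$ admits no infinite diagonal sequence (so that $|p[T]|\leq\kappa$ by Lemma \ref{l.diagseqsouslin}). There the generators reduce to an enumeration of $\mathcal A$ itself, and property (2) is no longer delivered by the diagonal-sequence machinery; instead it becomes the elementary statement that an infinite almost disjoint family of infinite sets has no finite subfamily whose union is cofinite. I expect this to be the main (indeed the only) obstacle, and it is dispatched by noting that if $x_0,\dots,x_{n-1}\in\mathcal A$ then any further member $y\in\mathcal A$ is infinite and almost disjoint from each $x_j$, whence $y\setminus\bigcup_{j<n}x_j$ is an infinite subset of $\omega\setminus\bigcup_{j<n}x_j$. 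With this observation conditions (1) and (2) hold uniformly, and $\mathcal I$ witnesses the corollary.
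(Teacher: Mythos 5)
Your proposal is correct and is essentially the paper's own argument: the paper justifies this corollary only by the remark that the proofs of Theorems \ref{t.mathias} and \ref{t.kappasouslin} already show it, and your unpacking --- that the sets $A^\alpha_i$ produced by the ordinal analysis serve as the $\le\kappa$ generators, that conditions (1) and (2) of Lemma \ref{l.souslineasy} are exactly containment and properness, and that $\MA(\kappa)$ enters only in the final diagonalization rather than in the construction --- is precisely what is intended. Your handling of the degenerate case $|p[T]|\le\kappa$ (which the theorem's proof dispatches by invoking MA) via the elementary observation about infinite almost disjoint families is the right way to eliminate the MA hypothesis there.
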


\section{No infinite mad families in Solovay's model}

In this section we will prove our main result:

\begin{theorem}\label{t.main1}
There are no infinite maximal almost disjoint families in Solovay's model.
\end{theorem}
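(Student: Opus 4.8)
The plan is to mimic the ordinal analysis of \S\ref{s.analytic} inside Solovay's model, using the homogeneity and absorption properties of the L\'evy collapse to play the role that Martin's axiom played in Theorem~\ref{t.kappasouslin}. Write $V$ for a ground model with an inaccessible $\kappa$, let $G$ be $\Coll(\omega,<\kappa)$-generic, and let $N$ denote Solovay's model, so that $\R^N=\R^{V[G]}$, $\omega_1^N=\kappa$, and every real of $N$ lies in $V[G\restrict\beta]$ for some $\beta<\kappa$. Suppose toward a contradiction that $\mathcal A$ is an infinite mad family in $N$. Since every set of reals in $N$ is definable from a real $a$ and ordinals, and since $a\in V[G\restrict\beta]$ for some $\beta<\kappa$ while the tail of the collapse is again a L\'evy collapse to $\kappa$ over $V[G\restrict\beta]$, I would first re-designate $V[G\restrict\beta]$ as the ground model; thus we may assume that $\mathcal A$ is ordinal definable in $N$.

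Next I would invoke Solovay's lemma, that membership in the ordinal definable set $\mathcal A$ is decided homogeneously by the collapse: for $x\in\R^N$ one has $x\in\mathcal A$ if and only if $\mathbf 1\Vdash_{\Coll(\omega,<\kappa)}\check x\in\mathcal A$. Coding this forcing relation in the standard way produces, in $V$, a tree $S$ on $2\times\kappa$ with $p[S]=\mathcal A$; since $\mathcal A$ is almost disjoint, so is $p[S]$. This is exactly the situation at the start of the $\kappa$-Souslin case: a $\kappa$-Souslin almost disjoint family, only now realized by a tree that lives in $V$.

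I would then run the diagonal-sequence analysis of the proof of Theorem~\ref{t.mathias}, in the form adapted to trees on $2\times\kappa$ (Lemma~\ref{l.diagseqsouslin} together with the scheme of Theorem~\ref{t.kappasouslin}), on the tree $S$. Because $\R^N=\R^{V[G]}$, the projection $p[S]$ is computed correctly in $V[G]$, where choice is available, so the recursion terminates at some $\alpha^*<\kappa^+$ and delivers a covering family $(A_\vartheta)_{\vartheta<\kappa}$ such that every $x\in\mathcal A$ is almost contained in a finite union of the $A_\vartheta$, while every finite union $\bigcup_{i<n}A_{\vartheta_i}$ has infinite complement. As the analysis trees $S^\alpha$, and hence the sets $A_\vartheta$, are definable from $S\in V$ and ordinals, the whole family is ordinal definable in $N$ and belongs to $N$.

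Finally I would manufacture the diagonalizing real. Forming the $\sigma$-centered poset $\Q$ of Lemma~\ref{l.souslineasy} from $(A_\vartheta)_{\vartheta<\kappa}$, a filter meeting the dense sets $D_{\mathcal F'}$ and $D_n$ produces an infinite $z$ almost disjoint from every $A_\vartheta$, hence from every member of $\mathcal A$, contradicting maximality, \emph{provided} $z\in N$. Since $\R^N=\R^{V[G]}$, it is enough to find such a $z$ in $V[G]$, and this is where the special nature of Solovay's model must replace Martin's axiom: I would use the inaccessibility of $\kappa$ to absorb $\Q$, a $\sigma$-centered poset of size at most $\kappa$ defined over the ground model, into the L\'evy collapse, so that $\Coll(\omega,<\kappa)$ already adds a $\Q$-generic filter over the ground model. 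I expect the main obstacle to be exactly this last coordination. On the one hand the $V$-tree $S$ must genuinely represent all of $\mathcal A$ and the output of the analysis must simultaneously be a correct cover of the $N$-members of $\mathcal A$ and a ground-model-definable, sufficiently small poset $\Q$; the non-absoluteness of projections of trees on $2\times\kappa$ makes reconciling these two demands the delicate point. On the other hand the absorption itself, namely that a $\sigma$-centered poset of size up to $\kappa$ is regularly embedded into $\Coll(\omega,<\kappa)$ for inaccessible $\kappa$ so that the diagonalizing real appears in $V[G]$ and therefore in $N$, is the crux, and is the step doing the work that $\MA(\kappa)$ did in Theorem~\ref{t.kappasouslin}.
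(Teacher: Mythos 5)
Your high-level plan (run the diagonal-sequence/ordinal analysis over $L[a]$, extract a covering family, then diagonalize) is the right shape, but both of the steps you yourself flag as the crux are genuine gaps, and the second would fail as stated. First, the reduction to a tree $S\in V$ on $2\times\kappa$ with $p[S]=\mathcal A$ is not a ``standard coding'': for an arbitrary ordinal-definable $\mathcal A$ the defining formula has unbounded quantifiers, and unlike the $\Sigma^1_2$/Shoenfield case there is no ground-model tree whose branches witness membership. What one can extract from the Solovay analysis is that $\mathcal A=\bigcup B_{\P,\sigma,p}$ over the ($\kappa$-many) triples in $L[a]$ with $p\forces\varphi(\sigma)$, where $B_{\P,\sigma,p}$ is the set of values of $\sigma$ at generics through $p$; amalgamating these into a single tree requires codes and choices that live in $V[G]$, not $V$. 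The paper sidesteps this entirely: it never builds a tree, but runs the derivative analysis directly on the triples $(\P_\xi,r_\xi,\sigma_\xi)$ themselves, replacing tree-diagonal-sequences by $(\P,\sigma)$-diagonal sequences of conditions (Lemma \ref{l.diagseqsolovay}) and replacing ``$\bigcup p[T_{[t]}]$'' by the sets $x(\sigma_\xi,p^0_i)=\{n:(\exists q\leq p^0_i)\,q\forces\check n\in\sigma_\xi\}$.

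Second, the absorption step is where the argument breaks. The poset $\Q$ of Lemma \ref{l.souslineasy} is built from the $A_\vartheta$, which only exist after the analysis is carried out in $V[G]$; so $\Q$ is not a ground-model poset, it has size $\kappa$, and a sufficiently generic filter must meet $\kappa=\aleph_1^{V[G]}=(2^{\aleph_0})^{V[G]}$ many dense sets in a model of CH. The factoring lemma for $\Coll(\omega,<\kappa)$ absorbs posets of size $<\kappa$ lying in $V$ (or in an intermediate $V[G\restrict\gamma]$); it gives you nothing here, and no general MA-type principle is available since CH holds in $V[G]$. The point you are missing, and the reason the paper needs no substitute for $\MA(\kappa)$ at all, is that the covering family can be taken \emph{countable in Solovay's model}: the sets $x(\sigma_\xi,p^0_i)$ are computed from the forcing relation in $L[a]$, hence lie in $\mathcal P(\omega)\cap L[a]$, which is countable in $M$ since $\Omega$ is inaccessible in $L[a]$ (and, more generally, any well-orderable family of reals in $M$ is countable by the perfect set property). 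Consequently the elementary, choice-free Lemma \ref{l.easy} finishes the proof, and no forcing over $V[G]$ is ever performed. If you reorganize your argument so that the covering sets are drawn from $L[a]$ (or merely noted to be a well-ordered, hence countable, family of reals in $M$), your final step dissolves; but as written, both the ground-model tree and the absorption of $\Q$ are unproved and the latter is false.
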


Let us review the basics of Solovay's model that are needed below. (Solovay's original paper \cite{solovay70}, as well as \cite[Chapter 26]{jech03}, provide very readable accounts of the ins and outs of Solovay's model. L\'evy's collapsing poset is defined in \cite[Chapter 15]{jech03}.) Starting with the constructible universe $L$, and assuming that there is an inaccessible cardinal $\Omega\in L$, let $M_0$ be the model obtained by adding to $L$ a generic collapsing map from $\omega$ to the ordinals $<\Omega$. Solovay's model $M$ is then the class of sets in $M_0$ that are hereditarily definable from $\omega$-sequences of ordinals. Note that $\aleph_1^{M}=\aleph_1^{M_0}=\Omega$.

\medskip

{\bf Fact 1.} If $x,y\in\omega^\omega\cap M$ then there is some poset $\P$ of size $<\Omega$ and $\P$-name $\sigma$ such that for $\sigma_G=x$ for some $L[y]$-generic $G$ in $M$; in fact, $\P$ can always be chosen to be the poset $\Coll(\omega, \kappa)$ collapsing some cardinal $\kappa<\Omega$ to $\omega$.

\medskip

{\bf Fact 2.} If $A\in M$ and $A\subseteq\omega^\omega$ then there is $a\in \omega^\omega\cap M$ and a formula $\varphi$ with parameters in $L[a]$ such that
$$
x\in A\iff L[a][x]\models \varphi(x).
$$

\medskip

Fact 2 is usually referred to as the ``Solovay property''. There is, of course, nothing privileged about $\omega^\omega$: Any Polish space in $M$ will do.

\medskip

The overall strategy for the proof of Theorem \ref{t.main1} is to mimic the tree analysis proofs of \S 2, but replace the tree with a poset $\P$ and a $\P$-name for a real.

Given a poset $\P$, a $\P$-name $\sigma$, and $p\in\P$, we let
$$
x(\sigma,p)=\{n\in\omega: (\exists q\leq_\P p) q\forces \check n\in\sigma\}.
$$
The central definition is the following.

\begin{definition} 
Let $\P$ be a poset, $r\in\P$ a condition, and let $\sigma$ be a $\P$-name. By a $(\P,\sigma)$-\emph{diagonal sequence below $r$ of length $\alpha\leq\omega$}, we mean a sequence $(p_i^j, n_i)$, $i<\alpha$, $j\in\{0,1\}$, in $\P\times\omega$ such that
\begin{enumerate}
\item $r\forces_\P\sigma\subseteq\omega$;
\item $p_i^j\leq_{\P} r$ for all $i,j$;
\item $p^j_{i+1}\leq_\P p^1_i$ for all $i<\alpha-1$ and $j\in\{0,1\}$;
\item $x(\sigma,p^0_i)\cap x(\sigma, p^1_i)\subseteq n_i$.
\end{enumerate}

A $(\P,\sigma)$-diagonal sequence will called finite or infinite according to whether $\alpha$ is finite or infinite.

\end{definition}

Given a forcing notion $\P$ and a $\P$-name $\sigma$, we let $\sigma_0,\sigma_1$ be the natural $\P\times\P$-names such that if $H=H_0\times H_1$ is $\P\times\P$-generic then $(\sigma_0)_H=\sigma_{H_0}$ and $(\sigma_1)_H=\sigma_{H_1}$.

\begin{lemma}\label{l.diagseqsolovay}
Let $N$ be an inner model, $\P\in N$ a forcing notion. Let $\sigma$ be a $\P$-name in $N$ and suppose $p\in\P$ is a condition such that 
$$
p\forces \sigma\subseteq\omega\wedge \sigma\notin N
$$
and 
\begin{equation}\tag{$*$}\label{eq.diag}
(p,p)\forces_{\P\times\P} \sigma_0\neq\sigma_1\to |\sigma_0\cap\sigma_1|<\aleph_0.
\end{equation}
Then there is an infinite $(\P,\sigma)$-diagonal sequence below $p$.
\end{lemma}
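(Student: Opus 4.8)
The plan is to adapt the proof of Lemma \ref{l.diagseq}, but with the crucial twist that descending sequences of conditions need not have lower bounds, so the almost-disjointness hypothesis \eqref{eq.diag} must be exploited through a \emph{density} argument rather than by reading off two branches. Everything takes place inside $N$, where $\forces$ is definable and $\P,\sigma$ live, with the recursion justified by DC.

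The heart of the matter is the following analogue of the Claim in Lemma \ref{l.diagseq}: if $q_0,q_1\leq_\P p$ satisfy $(q_0,q_1)\forces_{\P\times\P}\sigma_0\neq\sigma_1$, then there are $q_0'\leq_\P q_0$ and $q_1'\leq_\P q_1$ with $x(\sigma,q_0')\cap x(\sigma,q_1')$ finite. I would prove this by contradiction: if for \emph{every} $q_0'\leq_\P q_0$ and $q_1'\leq_\P q_1$ the set $x(\sigma,q_0')\cap x(\sigma,q_1')$ is infinite, then for each $m$ the set of $(q_0',q_1')\leq(q_0,q_1)$ for which some $n>m$ is forced into $\sigma$ by both coordinates is dense below $(q_0,q_1)$ (pick such an $n$ in the infinite set $x(\sigma,q_0')\cap x(\sigma,q_1')$, then pass to extensions on each side deciding $\check n\in\sigma$ positively). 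Meeting all these dense sets shows $(q_0,q_1)\forces |\sigma_0\cap\sigma_1|=\aleph_0$; together with $(q_0,q_1)\forces\sigma_0\neq\sigma_1$ and $(q_0,q_1)\leq(p,p)$, this contradicts \eqref{eq.diag}. This density step is precisely where the tree proof's ``union of nodes'' is replaced, and it is the point requiring the most care, since it is the only way to feed the almost-disjointness hypothesis in without actual generics.

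Next I would observe that $p\forces\sigma\notin N$ makes incompatibility in the first coordinate available everywhere: for each $q\leq_\P p$ there are $q_0,q_1\leq_\P q$ with $(q_0,q_1)\forces\sigma_0\neq\sigma_1$. Indeed, if no such pair existed below $q$, then whenever $q''\leq_\P q$ forces $\check n\in\sigma$ and $n\in x(\sigma,q)$, no $q'\leq_\P q$ could force $\check n\notin\sigma$ (else $(q'',q')\forces\sigma_0\neq\sigma_1$); hence $q\forces\sigma=\check a$ for $a=x(\sigma,q)\in N$, contradicting $q\leq_\P p\forces\sigma\notin N$.

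Finally I would build the infinite diagonal sequence by recursion. Condition (1) is the hypothesis $p\forces\sigma\subseteq\omega$. Starting from $r=p$ and, at stage $i$, working below the previously chosen $p^1_{i-1}$, I use the previous paragraph to find a pair below $p^1_{i-1}$ incompatible in the first coordinate, and then the Claim to shrink it to $p^0_i,p^1_i\leq_\P p^1_{i-1}$ with $x(\sigma,p^0_i)\cap x(\sigma,p^1_i)$ finite; choosing $n_i$ to bound this finite set gives (4), while (2) and (3) hold by construction. Since the extension step never fails, the recursion runs through all of $\omega$ and yields the desired infinite $(\P,\sigma)$-diagonal sequence below $p$. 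In contrast to Lemma \ref{l.diagseq}, no transfinite derivative analysis is needed here: the hypotheses hand us the extension property outright, and the ordinal machinery of the tree case is absorbed into the genericity argument inside the Claim.
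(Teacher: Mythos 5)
Your proof is correct and follows essentially the same strategy as the paper's: recursively find a pair of conditions below the current $p^1_{i-1}$ forcing $\sigma_0\neq\sigma_1$ (which must exist, since otherwise that condition would force $\sigma=\check a$ for $a=x(\sigma,p^1_{i-1})\in N$), then use \eqref{eq.diag} to shrink it to a pair whose $x(\sigma,\cdot)$-sets have bounded intersection. The only cosmetic differences are that the paper fixes a $\P$-generic filter over $N$ to locate the splitting pair and obtains $n_i$ by passing to a pair forcing $\sigma_0\cap\sigma_1\subseteq\check n_i$, whereas you argue purely syntactically inside $N$ via a density argument; both implementations are sound.
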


\begin{proof} Fix a $\P$-generic $G$ over $N$ such that $p\in G$. For any $p'\leq p$ we must have $\sigma_G\neq x(p',\sigma)$, since otherwise $p'\forces \sigma\in N$. Now build an infinite $(\P,\sigma)$-diagonal sequence as follows: Choose $q,r\leq p$ such that for some $n\in\omega$, $q\forces \check n\in\sigma$ and $r\forces \check n\notin\sigma$. Since then $(q,r)\forces_{\P\times\P} \sigma_0\neq\sigma_1$, it follows from (\ref{eq.diag}) that we can find $n_0\in\omega$ and $p_0^0\leq q$ and $p_0^1\leq r$ such that 
$$
(p_0^0,p_0^1)\forces_{\P\times\P} \sigma_0\cap\sigma_1\subseteq \check n_0.
$$
Repeat this argument below $p_0^1$ (instead of $p$) to obtain $n_1$ and $p_1^0, p_1^1\leq p_0^1$. And so on. \end{proof}

\begin{proof}[Proof of Theorem \ref{t.main1}]

Work in Solovay's model $M$. Let $\mathcal A\subseteq \mathcal P(\omega)$ be an uncountable almost disjoint family, and let $a\in \omega^\omega$ and $\varphi$ be such that
$$
x\in \mathcal A\iff L[a][x]\models \varphi(x).
$$
Let $(\P_{\xi},r_\xi,\sigma_\xi)$ enumerate (in $L[a]$) all triples in $L[a]$ where $\P_\xi$ is the poset for collapsing some cardinal $<\Omega$ to $\omega$, $r_\xi\in \P_\xi$, and $\sigma_\xi$ is a $\P_\xi$-name such that $r_\xi\forces \varphi(\sigma_\xi)\wedge\sigma_\xi\notin L[a]$. We define, by recursion on ordinals $\alpha$ in $L[a]$, sets $A^\alpha_i\subseteq\omega$, $i\in\omega$, as follows: If $A_i^{\beta}$ have been defined for $\beta<\alpha$, let $\xi$ be least such that there is an infinite $(\P_\xi,\sigma_\xi)$-diagonal sequence $(p_i^j)$ below $r_\xi$, and
$$
r_\xi\forces_{\P_{\xi}} (\forall k<\omega)(\forall \beta_0,\ldots\beta_k<\alpha)(\forall i_0,\ldots, i_k<\omega)\ |\sigma_\xi\setminus \bigcup_{j\leq k} A_{i_j}^{\beta_j}|=\aleph_0,
$$
and let $A_i^\alpha=x(\sigma_\xi,p^0_i)$. If no such $\xi$ exists then the process stops at $\alpha$, and we let $\alpha^*=\alpha$. 

\medskip

{\bf Claim 1.} $\alpha^*>0$.

\medskip

{\it Proof of claim}: We need to show that there is at least \emph{one} triple of the form $(\P_\xi,r_\xi,\sigma_\xi)$ as described above. Since $\mathcal A$ is uncountable it is not contained in $\mathcal P(\omega)\cap L[a]$. By Fact 1, we can find $\kappa<\Omega$, a $\Coll(\omega,\kappa)$-name $\sigma$, and $G\in M$ generic over $L[a]$, such that $\sigma_G\in\mathcal A$ and $\sigma_G\notin L[a]$. Then for some $p\in \Coll(\omega,\kappa)$ we have $p\forces \varphi(\sigma)\wedge\sigma\notin L[a]$. Since clearly $(p,p)\forces \varphi(\sigma_0)\wedge\varphi(\sigma_1)$, it follows that $p$ also satisfies (\ref{eq.diag}) of Lemma \ref{l.diagseqsolovay}. Thus the triple $(\Coll(\omega,\kappa),p,\sigma)$ works.
\hfill {\tiny Claim.}$\dashv$

\medskip

{\bf Claim 2.} If $G$ is $\P_\xi$-generic for some $\xi$ and $r_\xi\in G$, then $(\sigma_\xi)_G$ is almost contained (i.e., contained modulo a finite set) in the union of finitely many $A_i^\beta$, where $\beta<\alpha^*$, $i<\omega$.

\medskip

{\it Proof of claim}: If not, then there is $q\leq_{\P_{\xi}} r_{\xi}$ such that
$$
q\forces_{\P_{\xi}}(\forall k<\omega)(\forall \beta_0,\ldots\beta_k<\alpha^*)(\forall i_0,\ldots, i_k<\omega)\ |\sigma_\xi\setminus \bigcup_{j\leq k} A_{i_j}^{\beta_j}|=\aleph_0.
$$
But this contradicts that the process stops at $\alpha^*$. \hfill {\tiny Claim.}$\dashv$

\medskip

Let now $(A_j)_{j\in\omega}$ enumerate (in $M$) the all sets $A_i^{\alpha}$, $i<\omega,\alpha<\alpha^*$ as well as all the elements of $\mathcal A\cap L[a]$. The previous claim shows that every $x\in \mathcal A$ is contained in the union of finitely many of the $A_j$, so (1) in Lemma \ref{l.easy} is satisfied. (2) is established by an argument identical to that given in the proof of Theorem \ref{t.mathias}. Whence $\mathcal A$ is not maximal.
\end{proof}

Clearly, the previous proof also gives the following two corollaries:

\begin{corollary}
Every infinite almost disjoint family in Solovay's model is contained in a countably generated proper ideal on $\omega$.
\end{corollary}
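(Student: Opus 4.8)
The plan is to notice that the proof of Theorem~\ref{t.main1} already constructs the required ideal; one only has to read it ideal-theoretically. For an \emph{uncountable} almost disjoint family $\mathcal A\subseteq\mathcal P(\omega)$ in $M$, that proof produces a family $(A_j)_{j\in\omega}$ (enumerating the sets $A^\alpha_i$ for $i<\omega$, $\alpha<\alpha^*$, together with the members of $\mathcal A\cap L[a]$) and checks conditions (1) and (2) of Lemma~\ref{l.easy}. First I would let $\mathcal I$ be the ideal generated by this family together with the finite sets,
$$
\mathcal I=\{x\subseteq\omega: x\subseteq^*\textstyle\bigcup_{j\in F}A_j\text{ for some finite }F\subseteq\omega\},
$$
and then read off the ideal-theoretic content of the two conditions: condition (1), that every $x\in\mathcal A$ is almost contained in a union of finitely many $A_j$, says exactly that $\mathcal A\subseteq\mathcal I$; and condition (2), as it is actually verified (for \emph{arbitrary} finite subcollections of the generators, not merely for initial segments), says that the complement of every finite union of generators is infinite, which is precisely the statement $\omega\notin\mathcal I$, i.e.\ that $\mathcal I$ is proper.

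Next I would confirm that $\mathcal I$ is genuinely \emph{countably generated in $M$}. This holds because the recursion defining the $A^\alpha_i$ takes place in $L[a]$ and halts at some $\alpha^*<\omega_1^{L[a]}\le\Omega=\aleph_1^M$; since $\Omega$ is collapsed, every ordinal below it — in particular $\alpha^*$ — is countable in $M$, so the index set $\alpha^*\times\omega$ is countable in $M$, and likewise $\mathcal A\cap L[a]$ is countable in $M$ as all $L[a]$-cardinals below $\Omega$ are collapsed to $\omega$. Hence the generating family $(A_j)_{j\in\omega}$ is countable in $M$ and $\mathcal I\in M$ is a countably generated proper ideal with $\mathcal A\subseteq\mathcal I$, as required.

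It remains to handle a \emph{countably infinite} almost disjoint family $\mathcal A=\{x_n:n\in\omega\}$, which is a ZF matter needing no appeal to Solovay's model: here I would simply take the $x_n$ as generators. Containment is trivial, and the resulting ideal is proper because if a finite union $x_{n_1}\cup\cdots\cup x_{n_k}$ were cofinite, then any further $x\in\mathcal A$ would be almost contained in it, and so, being a finite union of sets $x\cap x_{n_l}$ that are finite by almost disjointness, would itself be finite — contradicting that members of $\mathcal A$ are infinite. The whole argument is bookkeeping rather than new mathematics; the only point demanding attention is the equivalence ``condition (2) $\Longleftrightarrow$ $\mathcal I$ proper'', which rests on the fact (already present in the proof of Theorem~\ref{t.mathias}) that condition (2) is established for all finite subfamilies of the generators.
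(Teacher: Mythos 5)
Your proposal is correct and is exactly the argument the paper intends: the paper offers no separate proof, simply remarking that the proof of Theorem \ref{t.main1} ``also gives'' the corollary, and your ideal-theoretic reading of conditions (1) and (2) of Lemma \ref{l.easy} is precisely that observation. Your explicit treatment of the countably infinite case is a detail the paper glosses over, and it is handled correctly.
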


\begin{corollary}
In $M_0$, the model obtained from $L$ by collapsing all cardinals below the inaccessible $\Omega$ to $\omega$, there are no projective mad families.
\end{corollary}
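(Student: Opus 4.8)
The plan is to observe that the proof of Theorem~\ref{t.main1} never used the full defining property of Solovay's model $M$, but only two features of it: Fact~1, and the Solovay property (Fact~2) \emph{for the single family $\mathcal A$}. Everything else in that argument — the recursion defining the sets $A^\alpha_i$, the diagonal-sequence analysis of Lemma~\ref{l.diagseqsolovay}, and the final appeal to Lemma~\ref{l.easy} — is carried out either internally in $L[a]$ or in ZF+DC, and is insensitive to whether the ambient model is $M$ or $M_0$. So I would recast the proof of Theorem~\ref{t.main1} as a template requiring exactly these two inputs, and then verify that both are available in $M_0$ when $\mathcal A$ is projective. Since a countably infinite almost disjoint family is never maximal, it suffices to rule out an \emph{uncountable} projective mad family, which is precisely the situation the template addresses.

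Fact~1 is a statement purely about the collapse: every real of $M_0$ lies in some $L[a][G_\alpha]$ for a small collapse, and is $\Coll(\omega,\kappa)$-generic over $L[a]$ for some $\kappa<\Omega$. As $M_0$ is literally the L\'evy collapse extension of $L$, Fact~1 holds there even more directly than in $M$. The substantive input is Fact~2, and here lies the only real content of the corollary. In $M$ the Solovay property holds for \emph{all} sets of reals by the very definition of $M$; in $M_0$ it is guaranteed only for sets definable from a real and ordinals. But every projective set is of exactly this form, so Solovay's fundamental lemma — exploiting the inaccessibility of $\Omega$ and the weak homogeneity of $\Coll(\omega,<\Omega)$ (\cite[Ch.~26]{jech03}) — converts the projective definition of $\mathcal A$ into a real $a$ and a formula $\varphi$ with parameters in $L[a]$ satisfying $x\in\mathcal A \iff L[a][x]\models\varphi(x)$. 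I expect this verification to be the main, and essentially the sole, obstacle, though it is standard.

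Granting Facts~1 and~2 for $\mathcal A$, the rest of the proof of Theorem~\ref{t.main1} transcribes verbatim. The recursion producing the triples $(\P_\xi,r_\xi,\sigma_\xi)$ and the sets $A^\alpha_i$ is internal to $L[a]$ and terminates at the same ordinal $\alpha^*<\Omega$ as before; Claim~1 uses Fact~1, and Claim~2 uses the forcing definition of the $A^\alpha_i$. Finally, because $\aleph_1^{L[a]}<\Omega=\aleph_1^{M_0}$, both $\{A^\alpha_i:\alpha<\alpha^*,\,i<\omega\}$ and $\mathcal A\cap L[a]$ are countable in $M_0$ and may be enumerated as $(A_j)_{j\in\omega}$, which Lemma~\ref{l.easy} then turns into an infinite $z\subseteq\omega$ almost disjoint from every member of $\mathcal A$, contradicting maximality. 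Together with the observation that countably infinite almost disjoint families are never maximal, this shows there are no infinite projective mad families in $M_0$.
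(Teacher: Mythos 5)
Your proposal is correct and matches the paper's intent: the paper derives this corollary by simply observing that the proof of Theorem~\ref{t.main1} goes through, and you have correctly identified that the only inputs that argument needs are Fact~1 and the Solovay property for the single family $\mathcal A$, both of which hold in $M_0$ for projective sets by Solovay's classical analysis of the L\'evy collapse. Your additional observations (that the $A^\alpha_i$ and $\mathcal A\cap L[a]$ remain countable in $M_0$, and that countably infinite almost disjoint families are never maximal) are exactly the points needed to finish.
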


The idea behind the proof of Theorem \ref{t.main1} can also be used to prove the following (a.k.a. Theorem \ref{t.intro12}), which seems to not have been known previously.

\begin{theorem}
If $\aleph_1^{L[a]}<\aleph_1$ then there are no infinite $\Sigma^1_2[a]$ mad families. 
\end{theorem}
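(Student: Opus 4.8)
The plan is to reduce the $\Sigma^1_2[a]$ case to the analytic case already settled in Theorem \ref{t.mathias}, using the hypothesis $\aleph_1^{L[a]}<\aleph_1$ precisely to collapse the relevant Souslin tree to a countable one. First I would dispose of the trivial case: since a countably infinite almost disjoint family is never maximal, I may assume throughout that the purported $\Sigma^1_2[a]$ mad family $\mathcal A$ is uncountable.

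The next step is to extract a tree representation. By the Shoenfield tree construction, there is a tree $T\in L[a]$ on $2\times\aleph_1^{L[a]}$ with $\mathcal A=p[T]$, and by Shoenfield absoluteness this identity holds in $V$ and not merely in $L[a]$; that is, $p[T]$ computed in the real world coincides with the $\Sigma^1_2[a]$ set $\mathcal A$. Thus $\mathcal A$ is $\aleph_1^{L[a]}$-Souslin, witnessed by a tree lying in $L[a]$. Now the hypothesis enters: because $\aleph_1^{L[a]}<\aleph_1$, the ordinal $\kappa:=\aleph_1^{L[a]}$ is countable in $V$. Fixing in $V$ a bijection $b\colon\kappa\to\omega$ and relabeling the second coordinate of $T$ accordingly produces a tree $T'$ on $2\times\omega$ with $[T']=\{(x,b\circ f):(x,f)\in[T]\}$, so that $p[T']=p[T]=\mathcal A$. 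Since $\kappa$ is countable, $T'$ is a countable tree in $V$, and hence $\mathcal A$ is the projection of a closed set, i.e.\ a $\mathbf{\Sigma}^1_1$ set (analytic with the real parameter $T'$) in $V$. Applying Theorem \ref{t.mathias} — whose proof uses only that the tree is countable and that its projection is an uncountable almost disjoint family — yields that $\mathcal A=p[T']$ is not maximal, which is the desired contradiction.

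Equivalently, and perhaps more in the spirit of \S\ref{s.analytic}, one may run the ordinal analysis directly with $\kappa=\aleph_1^{L[a]}$: the construction terminates at some $\alpha^*<\kappa^+$ and delivers a covering family $\{A^\alpha_i:\alpha\le\alpha^*,\,i<\omega\}$ of size at most $\kappa$. Because $\kappa$ is countable in $V$, this family is countable there, so Lemma \ref{l.easy} (which needs only ZF+DC) supplies the diagonalizing infinite set $z$ directly, and the appeal to Martin's axiom through Lemma \ref{l.souslineasy} is avoided entirely.

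The step I expect to require the most care is the absoluteness in the middle paragraph. One must verify that passing from $L[a]$ to $V$ neither adds nor omits branches relevant to $p[T]$ — equivalently, that the relabeled countable tree $T'$ really has $p[T']=\mathcal A$ in $V$ — which is exactly the content of Shoenfield absoluteness for the $\Sigma^1_2[a]$ predicate defining $\mathcal A$. Once $p[T']=\mathcal A$ is secured in $V$, everything downstream is an argument about a countable tree on $2\times\omega$ and its uncountable almost disjoint projection, so it is fully absolute and requires no further set-theoretic hypotheses.
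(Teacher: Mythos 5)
The reduction to the analytic case breaks down at its very first step: it is not true that a $\Sigma^1_2[a]$ set is the projection, \emph{as computed in $V$}, of a tree on $2\times\aleph_1^{L[a]}$ lying in $L[a]$. The Shoenfield tree $T$ for a $\Sigma^1_2[a]$ set is a tree on $2\times\omega_1^{V}$, and what absoluteness actually gives is $p[T]=\mathcal A$ in $V$ together with $p[T^\xi]\subseteq\mathcal A$ for each restriction $T^\xi=T\cap(2\times\xi)^{<\omega}$; the reverse inclusion $\mathcal A\subseteq p[T^{\kappa}]$ for $\kappa=\aleph_1^{L[a]}$ can fail, because a point of $\mathcal A$ may only admit witnesses whose associated ranking functions require ordinals $\geq\kappa$. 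Indeed, if your claim were correct, then under $\aleph_1^{L[a]}<\aleph_1$ for all $a$ every $\mathbf{\Sigma}^1_2$ set would be the projection of a countable tree, i.e.\ $\mathbf{\Sigma}^1_1$, which contradicts the hierarchy theorem; already $\mathrm{WO}$, the $\Pi^1_1$ (hence $\Sigma^1_2$) set of codes for wellorders, is $\omega_1$-Souslin via a tree in $L$ yet satisfies $p[T^\kappa]\subsetneq\mathrm{WO}$ for every countable $\kappa$, by $\mathbf{\Sigma}^1_1$-boundedness. The same defect undermines your alternative second route: running the $\kappa$-Souslin ordinal analysis with $\kappa=\aleph_1^{L[a]}$ presupposes exactly the tree representation that is missing (with $\kappa=\omega_1^{V}$ the analysis runs, but then the covering family is uncountable in $V$ and you are back to needing $\MA(\aleph_1)$). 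Note also that passing to the analytic subfamily $p[T^\kappa]$ and showing \emph{it} is not maximal would not suffice, since the diagonalizing set so produced could meet elements of $\mathcal A\setminus p[T^\kappa]$ infinitely.

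The paper's argument is genuinely different, and shows where the hypothesis $\aleph_1^{L[a]}<\aleph_1$ is really spent. It runs the diagonal-sequence analysis on $\Coll(\omega,\aleph_1^{L[a]})$-names for members of $\mathcal A$ over $L[a]$, producing inside $L[a]$ a family $(A^\beta_i)_{i<\omega,\beta<\aleph_1^{L[a]}}$ which almost covers every $y$ satisfying $\varphi(y)$ in the collapse extension while leaving every finite union co-infinite; it then forces over $L[a]$ with the ccc poset $\Q$ of Lemma \ref{l.souslineasy} --- a $\Q$-generic over $L[a]$ exists in $V$ precisely because $\aleph_1^{L[a]}$ is countable there --- to obtain a set $x_H$ almost disjoint from every such $y$ in $L[a][H]$; and finally it applies Shoenfield absoluteness to the $\Pi^1_2[x_H]$ sentence $(\forall y)(\varphi(y)\to|x_H\cap y|<\infty)$ to transfer the conclusion to $V$. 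That is the correct place for absoluteness in this theorem: at the level of the $\Pi^1_2$ statement about the diagonalizing real, not at the level of projections of restricted Shoenfield trees.
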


\begin{proof}
Let $\mathcal A$ be an infinite almost disjoint family which is $\Sigma^1_2$. (The argument that follows routinely relativizes to the parameter $a$, so we omit it.) The fact that $\mathcal A$ is infinite and almost disjoint is absolute by Shoenfield's theorem; see \cite[Theorem 25.20]{jech03}.

\medskip

Let $\P=\Coll(\omega,\aleph^1_L)$, and let $\varphi(x)$ be a $\Sigma^1_2$ formula defining $\mathcal A$.

\medskip

{\bf Claim 1.} There is a sequence $(A^\beta_i)_{i\in\omega,\beta<\aleph_1^L}$ in $L$ such that whenever $G$ is $\P$-generic over $L$ then 
$$
L[G]\models (\forall x)(\varphi(x)\to (\exists\beta_0,\ldots,\beta_k<\aleph_1^L)(\exists i_0,\ldots,i_k\in\omega)\  x\subseteq^* \bigcup_{j\leq k} A_{i_j}^{\beta_j}\ )
$$
and for any finitely many $\beta_0,\ldots,\beta_k<\aleph_1^L$ and $i_0,\ldots,i_k$ we have that
$$
|\omega\setminus \bigcup_{j\leq k} A_{i_j}^{\beta_j}|=\aleph_0.
$$

\medskip

{\it Proof of claim 1}:
We may assume that $\mathcal A$ is uncountable. Since every subset of $\omega$ in $L[G]$ has the form $\sigma_G$ for some $\P$-name $\sigma$, we can proceed exactly as in the proof of Theorem \ref{t.main1} to produce the desired sequence $A_i^{\alpha}\subseteq\omega$, $i\in\omega$, $\alpha<\alpha^*<\aleph_2^L$.
\hfill {\tiny Claim.}$\dashv$

\medskip

Fix a sequence $(A^\beta_i)_{i\in\omega,\beta<\aleph_1^L}$ as in Claim 1 above. Let $\Q$ be the ccc forcing notion (in $L$) defined as in Lemma \ref{l.souslineasy}, with conditions $(s,\mathcal F)$ where $s\in 2^{<\omega}$ and $\mathcal F\subseteq\{A^\beta_i:i\in\omega,\beta<\aleph_1^L$\} is finite. If $H\subseteq\Q$ is a filter then we let
$$
x_H=\bigcup\{s:(\exists \mathcal F) (s,\mathcal F)\in H\}.
$$
(Below we will, as usual, identify $x_H$ with the subset of $\omega$ it is the characteristic function of.) Recall from Lemma \ref{l.souslineasy} that if $H$ is sufficiently generic then $x_H\in [\omega]^\omega$.

\medskip

{\bf Claim 2.} $1_\Q\forces_{\Q,L}(\forall y\subseteq\omega) \varphi(y)\to |x_H\cap y|<\infty.$

\medskip

{\it Proof of claim 2}:
Suppose, aiming for a contradiction, that there is $q\in \Q$ such that
\begin{equation}\tag{$**$}\label{eq.force}
q\forces_{\Q,L}(\exists y\subseteq\omega) \varphi(y)\wedge |x_H\cap y|=\infty.
\end{equation}
Let $G$ be $\Coll(\omega,\aleph_1^L)$-generic over $L$. Since $\Q$ has the ccc and $(|\Q|=\aleph_1)^L$, there is $H\in L[G]$ which is $\Q$-generic over $L$ and $q\in H$.  By Claim 1, in $L[G]$ we have $|x_H\cap y|<\infty$ for all $y\in\mathcal A\cap L[G]$,  contradicting (\ref{eq.force}).
\hfill {\tiny Claim.}$\dashv$

\medskip

Finally, to finish the proof, assume $\aleph_1^L<\aleph_1$. Then, again using that $\Q$ has the ccc and $(|\Q|=\aleph_1)^L$, there is $H$ (in $V$) which is $\Q$-generic over $L$. By Claim 2,
$$
L[H]\models (\forall y\subseteq\omega) \varphi(y)\to |x_H\cap y|<\infty.
$$
Since $\varphi(x)$ is a $\Sigma^1_2$ predicate,
\begin{equation}\tag{$\dag$}\label{eq.LH}
(\forall y\subseteq\omega) \varphi(y)\to |x_H\cap y|<\infty
\end{equation}
is $\Pi^1_2[x_H]$, so is absolute for models containing $x_H$. In particular, (\ref{eq.LH}) holds in $V$, whence $x_H\cap y$ is finite for all $y\in\mathcal A$.
\end{proof}

\begin{corollary}
If $\aleph_1^{L[a]}<\aleph_1$ then every infinite $\Sigma^1_2[a]$ almost disjoint family  is contained in a countably generated proper ideal on $\omega$.
\end{corollary}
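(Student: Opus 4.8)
The plan is to bypass the forcing machinery of the preceding theorem entirely and instead reduce the statement to the $\kappa$-Souslin ideal corollary that follows Theorem \ref{t.kappasouslin}. The key point is that the hypothesis $\aleph_1^{L[a]}<\aleph_1$ forces a $\Sigma^1_2[a]$ family to be $\kappa$-Souslin for a \emph{countable} ordinal $\kappa$, and the $\kappa$-generated ideal produced by that corollary is then automatically countably generated.

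Concretely, I would argue as follows. As in the theorem I suppress the parameter $a$ and assume $\mathcal A$ is an infinite $\Sigma^1_2$ almost disjoint family; by Shoenfield's theorem these properties are absolute, so nothing is lost. The first step is to recall the sharp form of Shoenfield's tree representation: every $\Sigma^1_2$ subset of $2^\omega$ is $\aleph_1^L$-Souslin, and in fact equals $p[T]$ for a tree $T$ on $2\times\aleph_1^L$ lying in $L$ (see \cite{moschovakis80,jech03}). This uses that the Shoenfield tree attached to the defining formula can be taken inside $L$, where $\omega_1=\aleph_1^L$, and that the identity $p[T]=\mathcal A$ persists from $L$ to $V$ by Shoenfield absoluteness. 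Set $\kappa=\aleph_1^L$. The second, trivial but decisive, step is that $\aleph_1^L<\aleph_1$ makes $\kappa$ a countable ordinal in $V$, so $|\kappa|=\aleph_0$. The third step is to apply the corollary following Theorem \ref{t.kappasouslin} to the $\kappa$-Souslin family $\mathcal A$: it yields a proper $\kappa$-generated ideal $\mathcal I$ on $\omega$ with $\mathcal A\subseteq\mathcal I$. Since $|\kappa|=\aleph_0$, the ideal $\mathcal I$ is countably generated, which is exactly what is required.

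The step I expect to require the most care --- and which is really the heart of the matter --- is the legitimacy of invoking the $\kappa$-Souslin ideal corollary here, even though its neighbour Theorem \ref{t.kappasouslin} was proved under $\MA(\kappa)$. I would emphasise that Martin's axiom plays no role in the \emph{containment} assertion: the generators $A^\alpha_i$ are manufactured purely by the ordinal analysis of diagonal sequences (Lemma \ref{l.diagseqsouslin}), and conditions (1) and (2) of Lemma \ref{l.souslineasy} --- covering of every member of $p[T]=\mathcal A$, and infiniteness of the complement of every finite union of generators --- follow directly from the tree construction, exactly as in the verification of (1) and (2) in the proof of Theorem \ref{t.mathias}. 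Martin's axiom enters only afterwards, inside Lemma \ref{l.souslineasy}, to convert (2) into an actual diagonalising real, and that conversion is irrelevant for merely exhibiting the proper $\kappa$-generated ideal; hence the containment corollary is a theorem of ZFC and the reduction goes through. One should contrast this with reading the ideal off the forcing proof of the $\Sigma^1_2$ theorem directly: there the sets $A^\beta_i$ are only seen to cover those members of $\mathcal A$ lying in a collapse extension $L[G]$, and transferring this covering to all of $\mathcal A$ in $V$ is awkward, since the generating sequence, while countable in $V$, is uncountable in the ccc extension one can actually form inside $V$. Working with the fixed tree $T\in L$ sidesteps this obstacle by covering all branches of $T$ at once.
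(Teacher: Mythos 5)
Your reduction fails at the first step. It is not true that a $\Sigma^1_2[a]$ set equals $p[T]$ for a tree $T$ on $2\times\aleph_1^{L[a]}$ with the identity $p[T]=\mathcal A$ holding in $V$. The Shoenfield representation gives a tree on $2\times\aleph_1^{V}$; that tree is indeed an \emph{element} of $L[a]$, but its ordinal coordinate must range over the true $\omega_1$, because the ranks of the wellfounded trees witnessing the $\Pi^1_1$ matrix for arbitrary reals $x,y$ of $V$ are cofinal in $\omega_1^V$, not bounded by $\omega_1^{L[a]}$. For the restricted tree $T\cap(2\times\aleph_1^{L[a]})^{<\omega}$ one only gets the inclusion of its projection \emph{into} $\mathcal A$; the reverse inclusion fails in general, and Shoenfield absoluteness does not repair this, since the statement ``$\varphi(x)\to x\in p[T]$'' for a tree coded by a real $t\notin L[a]$ is not a $\Pi^1_2$ assertion with parameters in $L[a]$. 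You can see that your step 1 must be wrong because it proves far too much: a tree on $2\times\kappa$ with $\kappa$ countable can be re-indexed as a tree on $2\times\omega$, so ``$\kappa$-Souslin for countable $\kappa$'' is the same as ``analytic,'' and your argument would show that $\aleph_1^{L[a]}<\aleph_1$ makes every $\Sigma^1_2[a]$ set analytic --- contradicting the hierarchy theorem, which is a theorem of ZFC (already a complete $\Pi^1_1$ set is $\Sigma^1_2$ but not analytic). Once $\kappa$ is forced up to $\aleph_1^V$, the corollary following Theorem \ref{t.kappasouslin} yields only an $\aleph_1$-generated ideal, which is not what is claimed.

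The paper instead reads the corollary off the proof of the theorem immediately preceding it: the generators $A^\beta_i$ come from the diagonal-sequence analysis applied to $\Coll(\omega,\aleph_1^L)$-\emph{names} in $L[a]$ rather than to a Souslin tree in $V$, and it is the index set of that recursion --- controlled by $\aleph_1^{L[a]}$, which the hypothesis makes countable --- rather than the ordinal height of a tree representation that yields countable generation; the covering of all of $\mathcal A$ and the properness of the generated ideal are then extracted from Claims 1 and 2 of that proof by absoluteness. Your closing observation, that $\MA(\kappa)$ is not needed for the containment corollary after Theorem \ref{t.kappasouslin} because Martin's axiom enters only to diagonalize in Lemma \ref{l.souslineasy}, is correct and worth making, but it does not rescue the reduction, since the relevant $\kappa$ there is the true $\aleph_1$ and not a countable ordinal.
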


\section{Some open problems}

We close the paper by discussing a number of related problems that are, to the best of the author's knowledge, still wide open. 

\medskip

In light of the results of the present paper, the most obvious problem left open is the following:

\begin{problem}
Does AD, the Axiom of Determinacy\footnote{We refer to \cite{kechris95} for an introduction to AD.}, imply that there are no infinite mad families?
\end{problem}

The author expects the answer to this to be ``yes''.

\medskip

Another kind of almost disjoint family is that of an \emph{eventually different family} in $\omega^\omega$ (where $\omega^\omega$ is given the product topology, taking $\omega$ discrete). Here, two functions $f,g\in\omega^\omega$ are said to be eventually different if there is some $N$ such that for all $n\geq N$ we have $f(n)\neq g(n)$. The notion of a maximal eventually different family in $\omega^\omega$ is the obvious one. The following problem seems to go back to Mathias and Velickovic:

\begin{problem}
Can a maximal eventually different family be analytic?
\end{problem}

In the same vein:

\begin{problem}
Are there any maximal eventually different families in Solovay's model?
\end{problem}

Embarrassingly, we don't even know if such a family can be \emph{closed}. (It follows routinely from \cite[Corollary 21.23]{kechris95} that it can't be $K_\sigma$.)

\medskip

Finally, we mention the following open problem which goes back to Zhang (see e.g. \cite{kastermans09}). A permutation $\rho:\omega\to\omega$ is called \emph{cofinitary} if either $\rho=\id$ or $\{k\in\omega:\rho(k)=k\}$ is finite. A \emph{cofinitary group} is a subgroup of $S_\infty$ (= the group of \emph{all} permutations of $\omega$) which consists only of cofinitary permutations. $S_\infty$ is a Polish group in the relative topology it inherits from $\omega^\omega$, and so it makes sense to ask:

\begin{problem}
Can a maximal cofinitary group be analytic?
\end{problem}

Embarrassingly, we don't know if such a group can be \emph{closed}. Kastermanns \cite{kastermans09} has shown that such a group can't be $K_\sigma$. In a similar vein, we ask:

\begin{problem}
Are there maximal cofinitary groups in Solovay's model?
\end{problem}

\bibliographystyle{amsplain}
\bibliography{adfamilies}

\providecommand{\bysame}{\leavevmode\hbox to3em{\hrulefill}\thinspace}
\providecommand{\MR}{\relax\ifhmode\unskip\space\fi MR }
\providecommand{\MRhref}[2]{%
  \href{http://www.ams.org/mathscinet-getitem?mr=#1}{#2}
}
\providecommand{\href}[2]{#2}
\begin{thebibliography}{10}

\bibitem{BK13}
J{\"o}rg Brendle and Yurii Khomskii, \emph{Mad families constructed from
  perfect almost disjoint families}, J. Symbolic Logic \textbf{78} (2013),
  no.~4, 1164--1180. \MR{3156516}

\bibitem{FFZ11}
Vera Fischer, Sy~David Friedman, and Lyubomyr Zdomskyy, \emph{Projective
  wellorders and mad families with large continuum}, Ann. Pure Appl. Logic
  \textbf{162} (2011), no.~11, 853--862. \MR{2817561 (2012f:03089)}

\bibitem{jech03}
Thomas Jech, \emph{Set theory}, Springer Monographs in Mathematics,
  Springer-Verlag, Berlin, 2003, The third millennium edition, revised and
  expanded. \MR{1940513 (2004g:03071)}

\bibitem{kastermans09}
Bart Kastermans, \emph{The complexity of maximal cofinitary groups}, Proc.
  Amer. Math. Soc. \textbf{137} (2009), no.~1, 307--316. \MR{2439455
  (2009k:03086)}

\bibitem{kechris95}
Alexander~S. Kechris, \emph{Classical descriptive set theory}, Graduate Texts
  in Mathematics, vol. 156, Springer-Verlag, New York, 1995. \MR{1321597
  (96e:03057)}

\bibitem{kunen80}
Kenneth Kunen, \emph{Set theory}, Studies in Logic and the Foundations of
  Mathematics, vol. 102, North-Holland Publishing Co., Amsterdam-New York,
  1980, An introduction to independence proofs. \MR{597342 (82f:03001)}

\bibitem{mathias77}
A.~R.~D. Mathias, \emph{Happy families}, Ann. Math. Logic \textbf{12} (1977),
  no.~1, 59--111. \MR{0491197 (58 \#10462)}

\bibitem{moschovakis80}
Yiannis~N. Moschovakis, \emph{Descriptive set theory}, Studies in Logic and the
  Foundations of Mathematics, vol. 100, North-Holland Publishing Co.,
  Amsterdam-New York, 1980. \MR{561709 (82e:03002)}

\bibitem{solovay70}
Robert~M. Solovay, \emph{A model of set-theory in which every set of reals is
  {L}ebesgue measurable}, Ann. of Math. (2) \textbf{92} (1970), 1--56.
  \MR{0265151 (42 \#64)}

\bibitem{AT13}
Asger T{\"o}rnquist, \emph{{$\Sigma^1_2$} and {$\Pi^1_1$} mad families}, J.
  Symbolic Logic \textbf{78} (2013), no.~4, 1181--1182. \MR{3156517}

\end{thebibliography}

\end{document}